\DeclareSymbolFont{lettersA}{U}{txmia}{m}{it}
\newtheorem{Def}{\bf Definition}
\newtheorem{Thm}{\bf Theorem}
\newtheorem{Lem}{\bf Lemma}
\newtheorem{Prop}{\bf Proposition}
\newtheorem{Rem}{\bf Remark}
\newtheorem{Cor}{\bf Corollary}
\begin{document}
\title{Self-duality of multidimensional continued fractions}
\author{Hiroaki ITO}
\date{}
\maketitle

\begin{abstract}
F.~Schweiger introduced the fibred system in \cite{Schweiger-MCF}, to unify and generalize many known continued fraction algorithms. An advantage of a fibred system is that it often provides a systematic construction of absolutely continuous invariant density. 
In this paper, we define and study the self-duality of fibred systems, a strong symmetry of a given system. 
We show that explicit algebraic self-duality holds in many systems and presents a curious system with "partial" self-duality. 
\end{abstract}

\section{Introduction}
The classical continued fraction is a self map $T$ on $[0,1]$ 
defined by
$$
T:x \longmapsto \frac 1x -\left\lfloor \frac 1x\right\rfloor.
$$
Its absolutely continues invariant probabilistic density is
$$
d\nu= \frac 1{\log 2} \cdot
\frac 1{1+x} dx.
$$
The cylinder set $\Delta[a_1,a_2,\dots,a_n]$ is the interval 
whose elements share initial fraction:
$$
\cfrac {1}{a_1+\cfrac {1}{a_2+\cfrac{1}{\ddots + \cfrac {1}{a_n}}}}.
$$
Then we have
$$
\nu(\Delta[a_1,\dots,a_n])=\nu(\Delta[a_n,\dots,a_1]).
$$
We say that continued fraction algorithm is 
{\bf symmetric in measure} if this equality holds for all cylinder sets.
To see this symmetry, a standard way is to consider its natural extension:
$$
\hat{T}:[0,1]^2\ni (x,y) \mapsto \left(\frac 1x - \left\lfloor \frac 1x \right\rfloor, \frac 1{y+ \left\lfloor \frac 1x \right\rfloor}\right)\in [0,1]^2
$$
with the invariant density
$$
\frac 1{(1+xy)^2}dxdy.
$$
The map $\hat{T}$ is invertible:
$$
\hat{T}^{-1}:
(x,y) \mapsto \left(\frac 1{x + \left\lfloor \frac 1y \right\rfloor}, 
\frac 1y-\left\lfloor \frac 1y \right\rfloor\right)
$$
and the restriction of $\hat{T}^{-1}$
to the second coordinate is equal to $T$. The self-duality immediately follows from this fact.

To make concrete the tractable 
a natural extension for higher dimensional continued fractions,
F.~Schweiger constructed 
the dual algorithm $(B^{\#},T^{\#})$ of the fibred system $(B,T)$.
The pair $(B \times B^{\#}, T \times V^{\#}(k(x)))$ gives 
the natural extension of $(B,T)$ where $V^{\#}(k)$ is a local inverse branch of $T^{\#}$. In this framework, if there exists an isomorphism $\phi$ which satisfies:
\[
  \begin{CD}
     {B^{\#}} @>{T^{\#}}>> {B^{\#}} \\
  @V{\phi}VV    @V{\phi}VV \\
     {B} @>{T}>> {B}
  \end{CD}
\]
then the system is self-dual. 
We say that self-duality is realized by an intertwining map $\phi$. 
We will define an algebraic self-duality. If such a map
$\phi$ is found we simply say that the system $(B, T)$
is {\bf algebraic self-dual}. 
See section \ref{SelfDual} for details. 
In this paper we start with an easy observation:
\begin{Thm} 
\label{self-dual.symmetric}
If the fibred system $(B, T)$ is full and algebraic self-dual, then it is symmetric in measure.
\end{Thm}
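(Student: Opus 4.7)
The strategy is to pass to the natural extension $\hat{T} = T \times V^{\#}(k(x))$ on $B \times B^{\#}$ and track the evolution of cylinder rectangles under iteration. First I would observe that $\hat{T}$ accumulates digits into the second coordinate in reverse order: if $x \in \Delta[a_1, \ldots, a_n]$ and $y \in B^{\#}$, then the second coordinate of $\hat{T}^n(x, y)$ equals $V^{\#}(a_n) \circ V^{\#}(a_{n-1}) \circ \cdots \circ V^{\#}(a_1)(y)$, which, since each $V^{\#}(a_i)$ is a local inverse branch of $T^{\#}$, lies in the dual cylinder $\Delta^{\#}[a_n, a_{n-1}, \ldots, a_1]$. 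The fullness hypothesis gives $T^n(\Delta[a_1,\ldots,a_n]) = B$, so I obtain the rectangle identity
$$\hat{T}^{n}\bigl(\Delta[a_1,\ldots,a_n] \times B^{\#}\bigr) = B \times \Delta^{\#}[a_n, \ldots, a_1]$$
modulo null sets.

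Next I would invoke $\hat{T}$-invariance of the natural-extension measure $\hat{\nu}$ and project onto marginals. Since the marginals of $\hat{\nu}$ on $B$ and $B^{\#}$ are $\nu$ and $\nu^{\#}$ respectively, the rectangle identity immediately yields
$$\nu\bigl(\Delta[a_1,\ldots,a_n]\bigr) = \hat{\nu}\bigl(\Delta[a_1,\ldots,a_n] \times B^{\#}\bigr) = \hat{\nu}\bigl(B \times \Delta^{\#}[a_n,\ldots,a_1]\bigr) = \nu^{\#}\bigl(\Delta^{\#}[a_n,\ldots,a_1]\bigr).$$

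Finally I would apply algebraic self-duality. The isomorphism $\phi \colon B^{\#} \to B$ intertwines $T^{\#}$ with $T$, hence carries the digit partition of $(B^{\#}, T^{\#})$ to that of $(B, T)$ and sends $\Delta^{\#}[a_n,\ldots,a_1]$ to $\Delta[a_n,\ldots,a_1]$. Because $\phi$ is an isomorphism of the measured systems, it pushes $\nu^{\#}$ forward to $\nu$; combining $\nu^{\#}(\Delta^{\#}[a_n,\ldots,a_1]) = \nu(\Delta[a_n,\ldots,a_1])$ with the chain of equalities above gives the desired symmetry.

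The main delicate point will be the first step: one must check carefully that the accumulation of inverse branches indeed produces the reversed cylinder, and that fullness suffices to promote the obvious set inclusion to a genuine equality modulo null sets. Once that rectangle identity is in hand, the rest is routine bookkeeping with marginals of $\hat{\nu}$ and the intertwining property of $\phi$.
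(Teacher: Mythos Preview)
Your overall architecture matches the paper's: first establish
\[
\nu\bigl(\Delta[a_1,\ldots,a_n]\bigr)=\nu^{\#}\bigl(\Delta^{\#}[a_n,\ldots,a_1]\bigr)
\]
from fullness (this is exactly Lemma~\ref{full-symmetric}), then transport via $\phi$. Your route to the first equality through the natural extension and the rectangle identity is equivalent to the paper's computation with the kernel $K$ and the Jacobian identity~(\ref{2}); indeed $K(x,y)\,dx\,dy$ is precisely the density of the $\hat{T}$-invariant measure, so the two arguments are reformulations of one another and this part is fine.

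The gap is in your final step. You assert that ``$\phi$ is an isomorphism of the measured systems'' and hence pushes $\nu^{\#}$ forward to $\nu$. But the definition of algebraic self-duality in the paper only asks that $\phi$ be a bijective, differentiable, measurable map with $\phi(\mathring{B^{\#}(k)})=\mathring{B(k)}$ and $T\circ\phi=\phi\circ T^{\#}$; it says nothing about measure preservation. The intertwining relation guarantees only that $\phi_*\nu^{\#}$ is \emph{some} absolutely continuous $T$-invariant measure, and without an ergodicity or uniqueness hypothesis (which is not assumed) you cannot conclude it coincides with $\nu$. The paper does not appeal to uniqueness; instead it uses that both $\nu$ and $\nu^{\#}$ are given concretely as integrals of the same symmetric kernel $K(x,y)=(1+\sum x_iy_i)^{-(n+1)}$ against Lebesgue measure, and performs a simultaneous change of variables $(x,y)\mapsto(\phi(x),\phi^{-1}(y))$ to obtain $\mu^{\#}(\phi^{-1}E)=\mu(E)$ directly. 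To make your argument complete you would need to supply this computation (or some other reason why $\phi_*\nu^{\#}=\nu$); the commutative diagram alone does not carry it.
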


However, we do not know when the self-duality holds in general, nor how to construct the intertwining map $\phi$ for a given full-branched fibred system. 
In the later section, we shall construct $\phi$ for several fibred systems in \cite{Schweiger-MCF}
and also give examples of fibred systems
which is not self-dual. 

\section{Invariant measure and self-duality}
\label{SelfDual}

In this chapter, we briefly review the concept of higher dimensional continued fractions by F.~Schweiger and shows Theorem 1. 

We say that the dynamical system $(B, T)$ is a fibred system if $\{B(k): k\in I\}$ is a partition of the set $B$ where $I$ is countable and $T|_{B(k)}$ is injective.

\begin{Def}
The fibred system $(B,T)$ is multidimensional continued fraction $(\bf{ m.c.f.}$, $n$-dimensional c.f.$)$ if \\
$1$. $B\subset \mathbb{R}^n$, \\
$2$. For every digits $k\in I$, there is a matrix $A_{T}(k)=((A_{ij}))\in GL(n+1,\mathbb{Z})$ such that $y=T(x)$, $x\in B(k)$ is given as 
\begin{align*}
y_i=\frac{A_{i0}+\sum_{j=1}^{n}A_{ij}x_j}{A_{00}+\sum_{j=1}^{n}A_{0j}x_j}.
\end{align*}
\end{Def}

\begin{Rem}
For all invertible $(n+1)\times (n+1)$-matrix $(a_{ij})$, we define a transformation $f: \mathbb{R}^{n}\longrightarrow \mathbb{R}^{n}$ satisfies
\begin{align*}
    f(x)_{i}=\frac{a_{i0}+\sum_{j=1}^{n}a_{ij}x_j}{a_{00}+\sum_{j=1}^{n}a_{0j}x_j}, 
\end{align*}
and we denote by $A_{f}$ the matrix $((a_{ij}))$. Then, we can verify $A_{f}A_{g}=A_{f\circ g}$.
\end{Rem}

Since $T|_{B(k)}$ is injective, there exists a local inverse branch of $T$
\begin{align*}
V(k):T(B(k)) \rightarrow B(k), \quad x=V(k)y
\end{align*}
We denote the inverse matrix of $A_{T}(k)$ by $((B_{ij}))$. Then $y=Tx$ is equivalent to 
\begin{align*}
x_i=\frac{B_{i0}+\sum_{j=1}^{n}B_{ij}y_j}{B_{00}+\sum_{j=1}^{n}B_{0j}y_j}.
\end{align*}
where $B_{ij}$ satisfies $B_{00}+\sum_{j=1}^{n}B_{0j}y_j>0$.

\begin{Def}
Let $(B,T)$ be a m.c.f. with matrices $\{A_{T}(k): k\in I\}$. The m.c.f. $(B^{\#},T^{\#})$ is dual algorithm if the following conditions holds$:$\\
$1$. $B(k_1,k_2,\cdots,k_n)\not=\emptyset$ if and only if $B^{\#}(k_n,k_{n-1},\cdots,k_1)\not=\emptyset$, \\
$2$. There is a partition $\{B^{\#}(k), k\in I\}$ of $B^{\#}$ such that the associated matrices $A_{T^{\#}}(k)=((A^{\#}_{ij}))$ of $T^{\#}$ restricted $B^{\#}(k)$ are the {\bf transposed matrices} of $A_{T}(k)$ such that $y=T^{\#}(x)$, $x\in B^{\#}(k)$ is given as 
\begin{align*}
y_i=\frac{A^{\#}_{i0}+\sum_{j=1}^{n}A^{\#}_{ij}x_j}{A^{\#}_{00}+\sum_{j=1}^{n}A^{\#}_{0j}x_j}.
\end{align*}
\end{Def}

Given a multidimensional continued fraction algorithm $(B, T)$, its dual map is formally 
defined by the transpose of $A_T$. We then try to find an appropriate dual space $B^{\#}$ and its decomposition  $\{B^{\#}(k): k\in I\}$ which satisfies condition 1.

After this construction, given an $n$-dimensional continued fraction we set 
\begin{align*}
K(x,y):=\frac{1}{(1+x_1y_1+x_2y_2+\cdots +x_ny_n)^{n+1}},
\end{align*}
and we denote by $\omega(k_1,k_2,\cdots,k_s;y)$ the Jacobian of $V(k_1,k_2,\cdots,k_s)=V(k_1)\circ V(k_2)\circ  \cdots\circ V(k_s)$. 
Then, we can see
\begin{align}\label{2}
K(V(k_1,\dots,k_s)x,y)\omega(k_1,\dots,k_s;x)=K(x,V^{\#}(k_s,\dots,k_1)y)\omega^{\#}(k_s,\dots,k_1;y)
\end{align}
by a straightforward calculation. For any $x\in B$, we define
\begin{align*}
D(x):=\{y\in B^{\#} : x\in \bigcap_{s=1}^{\infty}T^{s}B(k_{s}^{\#}(y),\cdots, k_{1}^{\#}(y))\}.
\end{align*}
Then, it is known that the following assertion holds (see Chapter 3 in \cite{Schweiger-MCF}):
\begin{Prop}\label{invareant-density}
\begin{align*}
h(x)=\int_{D(x)}K(x,y)dy
\end{align*}
is invariant density for $T$.
\end{Prop}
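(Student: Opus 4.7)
The plan is to verify the Perron--Frobenius identity
$$h(y) = \sum_{k \in I} \omega(k;y)\, h(V(k)y),$$
which is equivalent to the $T$-invariance of $h$ (the sum being understood to run over those $k$ with $y \in T(B(k))$). The derivation will have two ingredients: a set-theoretic decomposition of the dual fibres $D(\cdot)$, and one application of the duality identity (\ref{2}) under a change of variables.

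The structural step I prove first is the identity
$$D(y) = \bigsqcup_{k \in I} V^{\#}(k)\bigl(D(V(k)y)\bigr). \qquad (\ast)$$
Given $w \in D(y)$, set $k := k_1^{\#}(w)$ and $\tilde w := T^{\#}w$, so that $k_j^{\#}(\tilde w) = k_{j+1}^{\#}(w)$. For each $s \ge 1$ the preimage $x_s := V(k_s^{\#}(w),\ldots,k_1^{\#}(w))\,y$ lies in the cylinder $B(k_s^{\#}(w),\ldots,k_1^{\#}(w))$ and satisfies $T^{s-1}x_s = V(k)y$; since $x_s$ also belongs to the length-$(s-1)$ cylinder $B(k_s^{\#}(w),\ldots,k_2^{\#}(w))$, this shows $V(k)y \in T^{s-1}B(k_s^{\#}(w),\ldots,k_2^{\#}(w))$ for every $s$. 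This is precisely the defining condition $\tilde w \in D(V(k)y)$, so $w = V^{\#}(k)\tilde w \in V^{\#}(k)(D(V(k)y))$. The reverse inclusion runs the same chain backwards, and disjointness is immediate from $\{B^{\#}(k)\}_{k}$ being a partition of $B^{\#}$.

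Substituting $(\ast)$ into $h(y)=\int_{D(y)}K(y,z)\,dz$ and changing variables by $z = V^{\#}(k)w$ (with $dz = \omega^{\#}(k;w)\,dw$) on each summand gives
$$h(y) = \sum_{k \in I} \int_{D(V(k)y)} K(y,V^{\#}(k)w)\,\omega^{\#}(k;w)\,dw.$$
The duality identity (\ref{2}) at $s=1$, namely $K(y,V^{\#}(k)w)\,\omega^{\#}(k;w)=K(V(k)y,w)\,\omega(k;y)$, then converts the right-hand side to $\sum_{k} \omega(k;y)\int_{D(V(k)y)} K(V(k)y,w)\,dw = \sum_{k} \omega(k;y)\,h(V(k)y)$, which is the desired identity.

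The principal obstacle will be $(\ast)$. It rests on the reversal of digit order in the definition of $D(\cdot)$ --- the first dual digit $k_1^{\#}(w)$ corresponds to the last inverse branch $V(k)$ applied on the base side --- and on careful bookkeeping of which cylinder image of $T$ contains $V(k)y$. Once $(\ast)$ is in place, the remainder is essentially the one-line computation above using (\ref{2}) and a single change of variable.
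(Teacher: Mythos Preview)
The paper does not supply its own proof of this proposition; it simply cites Chapter~3 of Schweiger's monograph~\cite{Schweiger-MCF}. Your argument is correct and is exactly the standard proof found there: establish the fibre decomposition $D(y)=\bigsqcup_{k}V^{\#}(k)\bigl(D(V(k)y)\bigr)$, change variables in the integral defining $h$, and invoke the duality relation~(\ref{2}) with $s=1$ to obtain the Perron--Frobenius identity $h(y)=\sum_{k}\omega(k;y)\,h(V(k)y)$. The bookkeeping in your verification of $(\ast)$ is accurate, including the point that $w\in D(y)$ forces $y\in T(B(k_1^{\#}(w)))$, so the relevant inverse branch is defined.
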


\begin{Def}
A dynamical system $(B,T)$ is called full if $T(\mathring{B(d)})=\mathring{B}$ for all d $\in I$.
\end{Def}

Note that $D(x)=B^{\#}$ if the system $(B,T)$ is full. By Proposition \ref{invareant-density}, we can obtain an invariant measure $\mu^{\#}$ for the dual algorithm $(B^{\#},T^{\#})$.

\begin{Lem}
\label{full-symmetric}
The multidimensional c.f. $(B,T)$ is full, then one has
\begin{align*}
\mu(B(k_1,k_2, \cdots,k_s))=\mu^{\#}(B^{\#}(k_s,k_{s-1},\cdots,k_1)).
\end{align*}
\end{Lem}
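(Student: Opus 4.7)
The plan is to express both $\mu(B(k_1,\ldots,k_s))$ and $\mu^{\#}(B^{\#}(k_s,\ldots,k_1))$ as double integrals of the kernel $K(x,y)$ over suitable rectangles in $B\times B^{\#}$ and to match them via two changes of variable linked together by the duality identity~(\ref{2}). Since $(B,T)$ is full, the remark after Proposition~\ref{invareant-density} gives $D(x)=B^{\#}$ for every $x$, and hence
\[
\mu(B(k_1,\ldots,k_s))=\int_{B(k_1,\ldots,k_s)}\int_{B^{\#}}K(x,y)\,dy\,dx.
\]

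First I would substitute $x=V(k_1,\ldots,k_s)u$ in the outer integral; fullness makes $T^s$ a bijection from the cylinder $B(k_1,\ldots,k_s)$ onto $B$, and the Jacobian factor is $\omega(k_1,\ldots,k_s;u)$, so $u$ sweeps out all of $B$. Applying identity~(\ref{2}) then replaces the integrand by $K(u,V^{\#}(k_s,\ldots,k_1)y)\,\omega^{\#}(k_s,\ldots,k_1;y)$. After Fubini, I would perform the substitution $v=V^{\#}(k_s,\ldots,k_1)y$ in the $y$-integral; the Jacobian factor $\omega^{\#}(k_s,\ldots,k_1;y)\,dy$ is exactly $dv$, and $v$ ranges over $V^{\#}(k_s,\ldots,k_1)(B^{\#})$. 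The expression collapses to
\[
\int_{B}\int_{V^{\#}(k_s,\ldots,k_1)(B^{\#})}K(u,v)\,dv\,du,
\]
which, swapping the order once more and invoking the dual form of Proposition~\ref{invareant-density}, should equal $\mu^{\#}(B^{\#}(k_s,\ldots,k_1))$.

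The step I expect to be the main obstacle is the final identification of the resulting double integral with $\mu^{\#}(B^{\#}(k_s,\ldots,k_1))$. It requires two ``dual fullness'' facts: that $V^{\#}(k_s,\ldots,k_1)(B^{\#})=B^{\#}(k_s,\ldots,k_1)$, so the substitution sweeps out exactly the reversed cylinder, and that $D^{\#}(v)=B$ on that cylinder, so that $\int_B K(u,v)\,du=h^{\#}(v)$. Both should be forced by fullness of $(B,T)$ together with condition~1 of the definition of the dual algorithm, which ties the admissibility of $B(k_1,\ldots,k_s)$ to that of $B^{\#}(k_s,\ldots,k_1)$ and, iterated, promotes branchwise surjectivity of $T^s$ to that of $T^{\#s}$ on every dual cylinder. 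Once that is justified, identity~(\ref{2}) itself reduces to a short matrix computation using $A_{T^{\#}}(k)=A_T(k)^{T}$, and the two changes of variable are routine applications of the Jacobian formula.
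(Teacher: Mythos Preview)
Your proposal is correct and follows the same route as the paper: start from the density formula with $D(x)=B^{\#}$, substitute $x=V(k_1,\ldots,k_s)u$, apply identity~(\ref{2}), and reinterpret the resulting double integral as the dual measure of the reversed cylinder. The paper is slightly terser---after invoking the symmetry $K(x,y)=K(y,x)$ it recognizes $\int_{B^{\#}}\int_{B}K(V^{\#}(k_s,\ldots,k_1)y,x)\,\omega^{\#}(k_s,\ldots,k_1;y)\,dx\,dy$ directly as $\mu^{\#}(B^{\#}(k_s,\ldots,k_1))$ without performing your second substitution explicitly, and it takes the dual-fullness points you flag for granted.
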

\begin{proof}
For all $k_1, \dots. k_s\in I$, by Proposition  \ref{invareant-density},
\begin{align*}
\mu(B(k_1,k_2, \cdots,k_s))
&=\int_{B(k_1,k_2, \cdots,k_s)}\int_{B^{\#}}K(x,y)dydx\\
&=\int_{B}\int_{B^{\#}}K(V(k_1,k_2, \cdots,k_s)x,y)\omega(k_1,k_2, \cdots ,k_s;x)dydx.
\end{align*}
By (\ref{2}), we have
\begin{align*}
\mu(B(k_1,k_2, \cdots,k_s))
&=\int_{B}\int_{B^{\#}}K(x,V^{\#}(k_s,k_{s-1}, \cdots ,k_1)y)\omega^{\#}(k_s,k_{s-1}, \cdots ,k_1;y)dydx\\
&=\int_{B^{\#}}\int_{B}K(V^{\#}(k_s,k_{s-1}, \cdots ,k_1)y,x)\omega^{\#}(k_s,k_{s-1}, \cdots ,k_1;y)dxdy\\
&=\mu^{\#}(B^{\#}(k_s,k_{s-1}, \cdots,k_1)).
\end{align*}
\end{proof}

\begin{Def}
A dynamical system $(B,T)$ is ``algebraic self-dual" on $\mathcal{D}\subset I$ if the diagram
\[\xymatrix{
B^{\#} \ar[r]^{T^{\#}} \ar[d]^{\phi} & B^{\#} \ar[d]\\
B \ar[r]^{T} & B\\
}\]
is commutative and $\phi$ is a bijective, differentiable and measurable function map  such that $\phi(\mathring{B^{\#}(k)})=\mathring{B(k)}$ for all $k\in \mathcal{D}$.
\end{Def}

For the regular continued fraction algorithm $([0,1),T)$, we have
\begin{align*}
    A_T(k)=\left(
\begin{array}{ccc}
0 & 1\\
1 & -k
\end{array} 
\right),  \quad
k=\left\lfloor\frac 1x \right\rfloor.
\end{align*}
Thus it is clearly self-dual since $T=T^{\#}$.

\begin{proof}[Proof of Theorem $\ref{self-dual.symmetric}$]
Note that the map $\phi$ is bijective, since $(B,T)$ is algebraic self-dual (on $\mathcal{D}=I$). By substitution, for all $k_1,k_2,\cdots,k_s\in I$
\begin{align*}
\mu^{\#}(\phi^{-1}B(k_1,k_2,\cdots,k_s))&=\int_{\phi^{-1}B(k_1,k_2,\cdots,k_s)}\int_{B}K(x,y)dydx\\
&=\int_{B(k_1,k_2,\cdots,k_s)}\int_{B^{\#}}K(X,Y)dYdX\\
&=\mu(B(k_1,k_2,\cdots,k_s)).
\end{align*}
Therefore, by Lemma \ref{full-symmetric}, we have
\begin{align*}
\mu(B(k_1,k_2,\cdots,k_s))
&=\mu^{\#}(B^{\#}(k_1,k_2,\cdots,k_s))\\
&=\mu(B(k_s,k_{s-1},\cdots,k_1)).
\end{align*}
\end{proof}

Note that the m.c.f. $(B,T)$ is algebraic self-dual on $\mathcal{D}\subset I$, then for all $k_1,k_2,\cdots,k_s\in \mathcal{D}$
\begin{align*}
    \mu(B(k_1,k_2,\cdots,k_s))=\mu(B(k_s,k_{s-1},\cdots,k_1)).
\end{align*}




The following contents is due to Schweiger (\cite{Schweiger-EToFSaMT}, \cite{Schweiger-IMfMoCFT}).


Let $\mu$ be an invariant measure for the multidimensional continued fraction $(B, T)$. The set function
\begin{align*}
\tau(B(k_1,k_2,\cdots,k_s))=\mu(B(k_{s},k_{s-1},\cdots,k_1))
\end{align*}
is called the polar measure for $(B, T)$.

\begin{Thm}
The kernel measure $\kappa$ is absolutely continuous with respect to Lebesgue measure $\lambda$ if and only if the measure $\tau$ coincides with the invariant measure $\mu$.
\end{Thm}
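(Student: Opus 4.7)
The plan is to read both sides of the equivalence through the kernel representation. By Proposition~\ref{invareant-density}, the invariant measure $\mu$ has Lebesgue density $h(x)=\int_{D(x)}K(x,y)\,dy$, and, symmetrically, the dual invariant measure $\mu^{\#}$ has a density $h^{\#}$ on $B^{\#}$. The polar measure $\tau$ is obtained from $\mu$ by reversing cylinder indices; the kernel measure $\kappa$ plays the analogous role for $\mu^{\#}$, encoding on $B(k_1,\ldots,k_s)$ the value $\mu^{\#}(B^{\#}(k_s,\ldots,k_1))$. The whole argument pivots on identity~(\ref{2}), which is precisely what exchanges $V(k_1,\ldots,k_s)$ acting on $B$ with $V^{\#}(k_s,\ldots,k_1)$ acting on $B^{\#}$.

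For the implication $\tau=\mu\Rightarrow\kappa\ll\lambda$, I would note that the hypothesis makes $\kappa$ and $\mu$ coincide on the semi-algebra of cylinders, which generates the Borel $\sigma$-algebra of $B$. Since $\mu$ already carries the Lebesgue density $h$, the extension of $\kappa$ to a Borel measure is forced to equal $\mu$, and hence inherits absolute continuity with respect to Lebesgue measure. This direction is essentially bookkeeping, assuming one has already verified $\sigma$-additivity of $\kappa$ on cylinders (which in turn follows from the analogous property for $\mu^{\#}$).

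For the converse, I would compute $\kappa$ on cylinders by unfolding through~(\ref{2}): starting from
\begin{align*}
\kappa(B(k_1,\ldots,k_s))=\int_{B^{\#}(k_s,\ldots,k_1)}h^{\#}(y)\,dy,
\end{align*}
expanding $h^{\#}$ via its kernel integral, and applying the substitution $y=V^{\#}(k_s,\ldots,k_1)y'$. The resulting expression must be compared with the analogous expansion of $\mu(B(k_s,\ldots,k_1))$ coming from Proposition~\ref{invareant-density}. Under $\kappa\ll\lambda$ one may equate Radon--Nikodym densities with respect to Lebesgue measure, yielding the cylinder-by-cylinder equality $\kappa(B(k_1,\ldots,k_s))=\mu(B(k_s,\ldots,k_1))$, i.e.\ $\tau=\mu$.

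The main obstacle is the non-full case: when $D(x)\subsetneq B^{\#}$, identity~(\ref{2}) produces only a partial match between the $\mu$- and $\kappa$-integrals, so a naive cylinder computation returns an inequality rather than an equality. The genuine content of the theorem is to show that Lebesgue absolute continuity of $\kappa$ is precisely the condition which rules out any singular contribution coming from the complement of $D(x)$, forcing the partial match to become exact. Making that implication rigorous — most likely via a Lebesgue decomposition of $\kappa$ into the part absorbed by $\mu$ and a residual part supported off $D(x)$ — is the step I expect to be delicate.
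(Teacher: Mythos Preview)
The paper does not supply a proof of this theorem at all: it is quoted from Schweiger's works \cite{Schweiger-EToFSaMT}, \cite{Schweiger-IMfMoCFT} and stated without argument. More seriously for your attempt, the paper never defines the kernel measure $\kappa$; the only object introduced in that passage is the polar measure $\tau$. Your description of $\kappa$ as ``encoding on $B(k_1,\ldots,k_s)$ the value $\mu^{\#}(B^{\#}(k_s,\ldots,k_1))$'' is a guess at Schweiger's definition, and without the actual definition in hand you cannot verify either implication. In Schweiger's framework the kernel measure is tied to a Kuzmin-type equation and is not simply the pull-back of $\mu^{\#}$ through cylinder reversal, so your starting identification is already suspect.

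Even granting your interpretation of $\kappa$, the proposal is not a proof but a plan with an admitted hole. In the forward direction you assert that $\tau=\mu$ forces $\kappa=\mu$ on cylinders, yet this uses an unproved link between $\kappa$ and $\mu$ (Lemma~\ref{full-symmetric} only gives $\mu=\mu^{\#}$ under a fullness hypothesis you do not assume). In the converse direction, the sentence ``under $\kappa\ll\lambda$ one may equate Radon--Nikodym densities \ldots\ yielding the cylinder-by-cylinder equality'' is a non sequitur: absolute continuity gives you \emph{a} density, not \emph{the} density $h$. You correctly flag the non-full case as the crux, but the Lebesgue-decomposition idea you sketch is left entirely unexecuted. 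To make progress you would first need Schweiger's precise definition of $\kappa$ and then his argument, which proceeds through the Kuzmin equation rather than through identity~(\ref{2}) alone.
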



\section{Garrity-Schweiger Algorithm}
Let $E^{n+1}:=\{x\in\mathbb{R}^{n+1}_{>} : x_0\ge x_1\ge\cdots \ge x_{n}\ge 0\}$. We define
\begin{align*}
G(x)=(x_{1},\dots, x_{n}, x_0-x_1-kx_{n})\in E^{n+1}, \quad 
    k=\left\lfloor\frac{x_0-x_1}{x_n}\right\rfloor.
\end{align*}
With the help of the projection
\begin{align*}
    p:E^{n+1}\longrightarrow \Delta, \quad p(x_0,x_1,\dots,x_n)=\left(\frac{x_1}{x_0},\dots, \frac{x_n}{x_0}\right),
\end{align*}
we obtain the bottom map $T: \Delta \longrightarrow \Delta$ which makes the diagram
\[
  \begin{CD}
     {E^{n+1}} @>{G}>> {E^{n+1}} \\
  @V{p}VV    @V{p}VV \\
     {\Delta} @>{T=T_G}>> {\Delta}
  \end{CD}
\]
commutative. The map $T$ is 
\begin{align*}
T(x)=\left(\frac{x_2}{x_1}, \frac{x_3}{x_1},\cdots\frac{x_n}{x_1}, \frac{1-x_1-kx_n}{x_1}\right),\quad  \displaystyle k=k(x)=\left\lfloor\frac{1-x_1}{x_n}\right\rfloor.
\end{align*}
The $1$-time partition of $\Delta$ is 
\begin{align*}
\Delta(k)=\{x\in B : 1-x_1-kx_n\ge 0> 1-x_1-(k+1)x_n\},\quad k\in\{0,1,2,\dots\}
\end{align*}
and this fibred system is full.

\begin{figure}[h]
\begin{center}
\scalebox{1}[1]{
\begin{tikzpicture}[domain=-1.2:5.5]
\draw[->] (-0.2,0) -- (5.2,0) ;
\draw[->] (0,-0.2) -- (0,2.2) ;
\draw [<-, domain=-1.1:0.1]plot(\x,4*\x);
\draw [dashed] (0,0) -- (4,-4) ;
\draw  (-1,-4) -- (4,-4) ;
\draw  (4,1) -- (4,-4) ;
\draw  (4,1) -- (0,0) ;
\draw  (4,1) -- (-1,-4) ;
\draw  (-1,-4) -- (2,1/2);
\draw  (-1,-4) -- (4/3,1/3);
\draw  (-1,-4) -- (1,1/4);
\draw [dashed] (4,-4) -- (2,1/2);
\draw [dashed] (4,-4) -- (4/3,1/3);
\draw [dashed] (4,-4) -- (1,1/4);
\fill (3,-0.5) node[below] {\small{0}};
\fill (1.5,-0.5) node[below] {\small{1}};
\fill (1,-0.5) node[below] {\small{2}};
\end{tikzpicture}}
\end{center}
\caption{The $1$-time partition of $(\Delta, T_{G})$ for $n=3$.}
\end{figure}

This continued fraction algorithm for $n=2$ was introduced by Garrity in \cite{Garrity-Opsfan}. The $n$-dimensional Garrity's map is also studied in \cite{BST-OtsLeosmcfa}. On the other hand, the following map was introduced by Schweiger in \cite{Schweiger-Aneojtaweim}:
\begin{align*}
    F:x\longmapsto \left(\frac{x_2}{x_1}-\left\lfloor\frac{x_2}{x_1}\right\rfloor, \frac{x_3}{x_1},\dots, \frac{x_n}{x_1}, \frac{1}{x_1}-1\right)
\end{align*}
on $[0,1)\times \mathbb{R}_{\ge}^{n-1}$. The dynamical system $([0,1)\times \mathbb{R}_{\ge}^{n-1}, F)$ is isomorphic to the dual algorithm of $(\Delta, T_G)$. It is known that the m.c.f. $(\Delta, T_G)$ is ergodic with respect to the Lebesgue measure for $n=2$ (see \cite{Messaoudi.Nogueira.Schweiger-Epotp}).
However, this algorithm is not topological convergent and it is not known if $F$ is ergodic for $n\ge 3$ (see \cite{Schweiger-Aneojtaweim}). In this paper, we call the m.c.f. $(\Delta, T_G)$ Garrity-Schweiger algorithm. 

We show that the m.c.f. $(\Delta, T_G)$ is algebraic self-dual. Since
\begin{align*}
A_{T}(k)=
\left(
\begin{array}{ccccc}
0 & 1 & 0 & \cdots & 0\\
0 & 0 & 1 & ~ & \vdots \\
\vdots & ~ & ~ & \ddots & 0 \\
0 & ~  &  ~ & ~ & 1\\
1 & -1 & 0  & \cdots & -k
\end{array} 
\right) \quad \textbf{on } \Delta(k), 
\end{align*}
the dual map $T^{\#}$ is 
\begin{align*}
T^{\#}(x)=\left(\frac{1-x_n}{x_n},\frac{x_1}{x_n}, \cdots ,\frac{x_{n-2}}{x_n}, \frac{x_{n-1}-kx_n}{x_n}\right).
\end{align*}
This dual map is defined on
\begin{align*}
\Delta^{\#}=\{x\in \mathbb{R}^{n} : x_{i}\ge 0, 1\le i \le n-1, 0\le x_n<1\}=\mathbb{R}^{n-1}_{\ge}\times [0,1)
\end{align*}
and the $1$-time partition is given by 
\begin{align*}
\Delta^{\#}(k)=\{x\in \Delta^{\#} :  x_{n-1}-kx_{n}\ge 0 >x_{n-1}-(k+1)x_{n}\}, \quad k=k^{\#}(x)=\left\lfloor\frac{x_{n-1}}{x_n}\right\rfloor.
\end{align*}

Since the Garrity-Schweiger algorithm is full system, the invariant density $h$ is
\begin{align*}
\int_{\mathbb{R}^{n-1}_{\ge}\times [0,1)}K(x,y)dy\sim\frac{1}{x_1\cdots x_{n-1}(1+x_n)}.
\end{align*}

\begin{figure}[h]
\begin{center}
\begin{tabular}{cc}
\scalebox{1}[1]{
\begin{tikzpicture}[domain=-0.2:5.5]
\draw[very thin,color=gray, dashed] (0,5) -- (5,5);
\draw[thin] (5,0) -- (5,5);
\draw[->] (-0.2,0) -- (5.2,0) ;
\draw[->] (0,-0.2) -- (0,5.2) ;
\draw [domain=0:5]plot(\x,\x);
\draw [domain=2.5:5]plot(\x,{-\x+5)});
\draw [domain=5/3:5]plot(\x,{-\x/2+5/2}) ;
\draw [domain=5/4:5]plot(\x,{-\x/3+5/3}) ;
\fill (3.7,2.6) node[below] {\small{0}};
\fill (2.5,1.9) node[below] {\small{1}};
\fill (1.7,1.6) node[below] {\small{2}};
\end{tikzpicture}}
\scalebox{1}[1]{
\begin{tikzpicture}[domain=-0.2:5.5]
\draw[very thin,color=gray, dashed] (0,5) -- (5,5);
\draw[thin] (5,0) -- (5,5);
\draw[->] (-0.2,0) -- (5.2,0) ;
\draw[->] (0,-0.2) -- (0,5.2) ;
\draw [domain=0:5]plot(\x,\x);
\draw [domain=2.5:5]plot(\x,{-\x+5)});
\draw [domain=5/3:5]plot(\x,{-\x/2+5/2}) ;
\draw [domain=5/4:5]plot(\x,{-\x/3+5/3}) ;
\draw[thin] (10/3,5/3) -- (5,5);
\draw[thin] (15/4,5/4) -- (5,5);
\draw[thin] (5/2,5/4) -- (5/2,5/2);
\draw[thin] (3,1) -- (5/2,5/2);
\draw[thin] (2,1) -- (5/3,5/3);
\draw[thin] (5/2,5/6) -- (5/3,5/3);
\fill (3.1,2.7) node[below] {\small{00}};
\fill (3.7,2.1) node[below] {\small{01}};
\fill (2.1,2) node[below] {\small{10}};
\fill (2.7,1.6) node[below] {\small{11}};
\fill (1.6,1.5) node[below] {\small{20}};
\end{tikzpicture}}
\end{tabular}
\end{center}
\caption{The $1$-time and $2$-time partition of $(\Delta, T_{G})$.}
\end{figure}

\begin{figure}[h]
\begin{center}
\begin{tabular}{ccc}
\scalebox{1}[1]{
\begin{tikzpicture}[domain=-0.2:7.5]
\draw[->] (-0.2,0) -- (7.2,0) ;
\draw[->] (0,-0.2) -- (0,5.2) ;
\draw [domain=0:4]plot(\x,\x);
\draw [domain=0:7]plot(\x,4);
\draw [domain=0:7]plot(\x,\x/2) ;
\draw [domain=0:7]plot(\x,\x/3) ;
\fill (1,2.1) node[below] {0};
\fill (2.7,2.1) node[below] {1};
\fill (4.5,2.1) node[below] {2};
\end{tikzpicture}}
\end{tabular}
\end{center}
\caption{The $1$-time partition of $(\Delta^{\#}, T_{G})$.}
\end{figure}

We found that this algorithm is self-dual for $n=2$ and 
the matrix $A_{\phi}$ is given by
\begin{align*}
A_{\phi}=
\left(
\begin{array}{ccc}
1 & 1 & 0\\
1 & 0 & 0\\
0 & 0 & 1\\
\end{array} 
\right)
\end{align*}
Here we describe our heuristic method to find such a matrix.
First we assume that $A_\phi=((a_{ij}))$ has integer entries. From 
$A_\phi A_{T^{\#}}= A_{T}A_\phi$, we see $A_\phi$ is symmetric.
Assume that $\phi$ sends $\Delta^{\#}(k) \cap \Delta^{\#}(k+1)$ to $\Delta(k) \cap\Delta(k+1)$.
In particular, if 
$\phi(0,0)=(1, 0)$, then we see 
\begin{align*}
a_{11}=a_{21}, \quad a_{31}=0.
\end{align*}
Put $a_{11}=x$, $A_{\phi}$ has to have a form
\begin{align*}
\left(
\begin{array}{ccc}
x & x & 0\\
x & * & *\\
0 & * & *\\
\end{array} 
\right).
\end{align*}
Further if $\phi(k,1)=(\frac{1}{k+1}, \frac{1}{k+1})$, then 
\begin{align*}
    \frac{x+ka_{22}}{x+kx}=
    \frac{ka_{32}+a_{33}}{x+kx}=\frac{1}{k+1}.
\end{align*}
Therefore we have $x=1$, $a_{22}=a_{32}=0, a_{33}=1$ and the condition $A_\phi A_{T^{\#}}(k)= A_{T}(k)A_\phi$, $\phi(\Delta^{\#}(k))=\Delta(k)$ 
are guaranteed. Thus we obtain the following.

\begin{Prop}
The Garrity-Schweiger algorithm is algebraic self-dual. And
\begin{align*}
A_{\phi}=\left(
\begin{array}{cccc}
1 & \cdots & 1 & ~ \\
\vdots & \iddots & ~ & ~ \\
1 & ~ & ~ & ~ \\
~ & ~ & ~ &  1\\
\end{array} 
\right).
\end{align*}
\end{Prop}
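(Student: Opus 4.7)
The plan is to check the three defining conditions of algebraic self-duality in turn, taking as $\phi$ the projective transformation associated to the matrix $A_\phi$ announced in the statement. Throughout I index rows and columns of the $(n+1)\times(n+1)$ matrices by $0,1,\dots,n$, so that $(A_\phi)_{ij}=1$ exactly when $0\le i,j\le n-1$ with $i+j\le n-1$, or when $(i,j)=(n,n)$, and $(A_\phi)_{ij}=0$ otherwise. Note that $A_\phi$ is manifestly symmetric, with $\det A_\phi=\pm 1$.

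First I would unfold $\phi$ in coordinates. Writing $D(x):=1+x_1+\cdots+x_{n-1}$, the projective transformation associated to $A_\phi$ is
\begin{align*}
\phi(x_1,\dots,x_n)=\left(\frac{1+x_1+\cdots+x_{n-2}}{D},\,\frac{1+x_1+\cdots+x_{n-3}}{D},\,\dots,\,\frac{1}{D},\,\frac{x_n}{D}\right).
\end{align*}
On $\Delta^{\#}=\mathbb{R}^{n-1}_{\ge}\times[0,1)$ one has $D>0$, so $\phi$ is smooth. A direct check shows that the image coordinates satisfy $1\ge y_1\ge\cdots\ge y_{n-1}\ge y_n\ge 0$, and I would record the inverse explicitly by setting $y_0:=1$ and
\begin{align*}
x_j=\frac{y_{n-1-j}-y_{n-j}}{y_{n-1}}\quad(1\le j\le n-1),\qquad x_n=\frac{y_n}{y_{n-1}};
\end{align*}
the monotonicity of the $y_i$ then forces $x_j\ge 0$ and $x_n\in[0,1)$. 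This settles bijectivity and differentiability of $\phi:\Delta^{\#}\to\Delta$.

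Next, for the partition correspondence, I would observe the key cancellation
\begin{align*}
1-y_1-ky_n=\frac{x_{n-1}-kx_n}{D}.
\end{align*}
Since $D>0$, the inequalities $1-y_1-ky_n\ge 0>1-y_1-(k+1)y_n$ cutting out $\Delta(k)$ translate verbatim to $x_{n-1}-kx_n\ge 0>x_{n-1}-(k+1)x_n$, i.e.\ to $x\in\Delta^{\#}(k)$. Hence $\phi(\mathring{\Delta^{\#}(k)})=\mathring{\Delta(k)}$ for every $k$.

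Finally, commutativity of the self-duality square reduces, as composition of projective maps, to the matrix identity $A_{T}(k)A_\phi=A_\phi A_{T^{\#}}(k)$. Because $A_\phi^{\top}=A_\phi$ and $A_{T^{\#}}(k)=A_T(k)^{\top}$, this is equivalent to showing that $A_T(k)A_\phi$ is \emph{symmetric}. I would establish this by a row-by-row inspection: rows $0,\dots,n-1$ of $A_T(k)$ act on $A_\phi$ as a downward cyclic shift, so the first $n$ rows of the product retain the anti-diagonal pattern, now with indices shifted; the last row $(1,-1,0,\dots,0,-k)$ of $A_T(k)$ outputs the row with a $1$ at position $(n,n-1)$ and $-k$ at position $(n,n)$, matching symmetrically the $1$ that has moved into position $(n-1,n)$ by the shift. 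I expect this final book-keeping to be the only mildly laborious step, but it is routine; the essential reason the identity works is that the anti-diagonal structure of $A_\phi$ is precisely what conjugates the shift block of $A_T$ into the shift block of $A_{T^{\#}}$, while its isolated $(n,n)$-entry absorbs the inhomogeneous term responsible for the digit $k$.
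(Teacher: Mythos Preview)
Your proposal is correct and follows essentially the same route as the paper: write $\phi$ (and its inverse) explicitly, check that the defining inequality of $\Delta(k)$ pulls back to that of $\Delta^{\#}(k)$ via the identity $1-y_1-ky_n=(x_{n-1}-kx_n)/D$, and verify the intertwining relation by a direct matrix computation. Your reduction of the commutativity to the symmetry of $A_T(k)A_\phi$ is a clean way to organize that last step, but it is the same verification the paper alludes to as ``a straightforward calculation''.
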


\begin{proof}
By a straightforward calculation, we can see that for all $x\in\Delta$
\begin{align*}
\phi \circ T^{\#}(x)=T\circ \phi(x).
\end{align*}

Let $\phi(B_1,B_2,\dots,B_n)=(b_1,b_2,\dots,b_n)$. Then, since
\begin{align*}
A_{\phi}^{-1}=
\left(
\begin{array}{ccccc}
 & & & 1 & \\
 & & 1 & -1 &\\
 & \iddots & \iddots & &\\
1 & -1 & & &\\
 & & & & 1
\end{array} 
\right),
\end{align*}
we have 
\begin{align*}
    B_{n-1}-\left\lfloor\frac{B_{n-1}}{B_n}\right\rfloor B_n
    =\frac{\displaystyle1-b_{1}-\left\lfloor\frac{1-b_{1}}{b_n}\right\rfloor b_n}{b_{n-1}}.
\end{align*}
Therefore, we can see $\phi(\mathring{\Delta^{\#}(k)})=\mathring{\Delta(k)}$ for all $k\in \mathbb{Z}_{\ge 0}$.
\end{proof}

\begin{Cor}
The Garrity-Schweiger algorithm is symmetric in measure.
\end{Cor}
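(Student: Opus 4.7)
The corollary is designed to be a direct application of Theorem \ref{self-dual.symmetric} to the Garrity-Schweiger algorithm, so the plan is essentially to verify the two hypotheses of that theorem and invoke it.

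First, I would record that the Garrity-Schweiger algorithm $(\Delta, T_G)$ is full. This is asserted immediately after the definition of the $1$-time partition $\Delta(k)$ earlier in the paper (``this fibred system is full''), so no further work is needed here; in principle one checks that for each $k \in \mathbb{Z}_{\ge 0}$, the restriction $T_G|_{\Delta(k)}$ maps the interior onto the interior of $\Delta$, which is transparent from the explicit formula for $T_G$.

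Second, I would invoke the Proposition just established, which says that $(\Delta, T_G)$ is algebraic self-dual with the intertwining map $\phi$ associated to the matrix $A_\phi$ displayed above. Combined with fullness, this places $(\Delta, T_G)$ squarely in the hypotheses of Theorem \ref{self-dual.symmetric}.

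Applying Theorem \ref{self-dual.symmetric} then yields
\begin{equation*}
\mu(\Delta(k_1, k_2, \dots, k_s)) = \mu(\Delta(k_s, k_{s-1}, \dots, k_1))
\end{equation*}
for all finite sequences $(k_1, \dots, k_s) \in I^s$, which is precisely the definition of being symmetric in measure. There is no genuine obstacle here; the only thing to be slightly careful about is to note that the Proposition establishes $\phi(\mathring{\Delta^{\#}(k)}) = \mathring{\Delta(k)}$ for \emph{every} $k \in \mathbb{Z}_{\ge 0}$, so the algebraic self-duality holds on the whole digit set $\mathcal{D} = I$, which is what Theorem \ref{self-dual.symmetric} requires in order to obtain the measure symmetry on all cylinders rather than only on cylinders with digits in a proper subset.
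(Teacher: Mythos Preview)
Your proposal is correct and matches the paper's intent exactly: the corollary is stated without proof precisely because it is the immediate combination of the fullness of $(\Delta,T_G)$, the preceding Proposition establishing algebraic self-duality on all of $I=\mathbb{Z}_{\ge 0}$, and Theorem~\ref{self-dual.symmetric}. There is nothing to add or change.
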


\section{Selmer Algorithm}
Let $E^{n+1}:=\{x\in\mathbb{R}^{n+1}_{>} : x_0\ge x_1\ge\cdots \ge x_{n}\ge 0\}$. Then define
\begin{align*}
    x\in E^{n+1}\longmapsto x^{'}=(x_0-x_n, x_2,\dots, x_{n}).
\end{align*}
There is an index $i=i(x)$, $0\le i\le n$ such that
\begin{align*}
S(x)=(x_{1},\dots, x_{i}, x_0-x_n, \dots, x_{n})\in E^{n+1}.
\end{align*}

We obtain the bottom map $T_S: \Delta \longrightarrow \Delta$ which makes the diagram
\[
  \begin{CD}
     {E^{n+1}} @>{S}>> {E^{n+1}} \\
  @V{p}VV    @V{}VV \\
     {\Delta} @>{T=T_{S}}>> {\Delta}
  \end{CD}
\]
commutative. Since
\begin{align*}
A_{T}(i)=
    \bordermatrix{
  & & & & i & & & n\cr
0 & & 1 & & & & &\cr
  & &   & \ddots & & & &\cr
  & &   & & 1 & & &\cr
i & 1 & &  & & & & -1\cr
  & & & & & 1 & &\cr
  & & & & & & \ddots & \cr
  & & & & & & & 1 \cr
} \quad \textbf{on}~ \Delta(i), 
\end{align*}
the $1$-time partition is
\begin{align*}
    \Delta(i)=\{x\in\Delta : x_{i}> 1-x_n \ge x_{i+1}\}
\end{align*}
where $x_0=1$, $x_{n+1}=0$. We can see
\begin{align*}
    T(\Delta(i))=\{x\in\Delta : x_{i}+x_n\ge 1\}.
\end{align*}
Therefore, for $i=0,1,\dots, n-1$
\begin{align*}
    T(\Delta(i))=\bigcup_{i\le j}\Delta(j), 
\end{align*}
and
\begin{align*}
    T(\Delta(n))=\Delta(n-1)\cup\Delta(n).
\end{align*}

The m.c.f. $(\Delta, T)$ is not full, but $(X=\Delta(n-1)\cup\Delta(n), T)$ is full-branched system.
The dual map of Selmer's algorithm $(X, T)$ is defined on $X^{\#}=\mathbb{R}^{n}_{\ge}$. It is known that Selmer algorithm $(X, T)$ is ergodic and admits an absolutely continuous invariant measure (see \cite{Schweiger-MCF}). 

\begin{figure}[h]
\begin{center}
\begin{tabular}{ccc}
\scalebox{1}[1]{
\begin{tikzpicture}[domain=-0.2:4]
\draw[very thin,color=gray, dashed] (0,3) -- (3,3);
\draw[thin] (3,0) -- (3,3);
\draw[->] (-0.2,0) -- (3.2,0) ;
\draw[->] (0,-0.2) -- (0,3.2) ;
\draw [domain=0:3]plot(\x,\x);
\draw [domain=1.5:3]plot(\x,{-\x+3)});
\draw [domain=1.5:3]plot(\x,3/2) ;
\fill (2.5,2.2) node[below] {2};
\fill (2.5,1.3) node[below] {1};
\fill (1.5,0.8) node[below] {0};
\end{tikzpicture}}
\scalebox{1}[1]{
\begin{tikzpicture}[domain=-0.2:4]
\draw[very thin,color=gray, dashed] (0,3) -- (3,3);
\draw[thin] (3,0) -- (3,3);
\draw[->] (-0.2,0) -- (3.2,0) ;
\draw[->] (0,-0.2) -- (0,3.2) ;
\draw [domain=0:3]plot(\x,\x);
\draw [domain=1.5:3]plot(\x,{-\x+3)});
\draw [domain=1.5:3]plot(\x,3/2) ;
\draw [domain=2:3]plot(\x,\x/2);
\draw [domain=2:3]plot(\x,-\x/2+3) ;
\draw [domain=1:2]plot(\x,1) ;
\draw [domain=1:3]plot(\x,{-\x/2+3/2)});
\fill (2.6,2.35) node[below] {21};
\fill (2.05,1.5) node[below] {12};
\fill (2.6,1.1) node[below] {11};
\fill (2.05,1.95) node[below] {22};
\end{tikzpicture}}
\end{tabular}
\end{center}
\caption{The $1$-time and $2$-time partition of $(\Delta, T_{S})$.}
\end{figure}

\begin{figure}[h]
\begin{center}
\begin{tabular}{ccc}
\scalebox{1}[1]{
\begin{tikzpicture}[domain=-0.2:4]
\draw[->] (-0.2,0) -- (3.2,0) ;
\draw[->] (0,-0.2) -- (0,3.2) ;
\draw [domain=0:3]plot(\x,\x);
\fill (1,2.2) node[below] {1};
\fill (2,1.3) node[below] {2};
\end{tikzpicture}}
\end{tabular}
\end{center}
\caption{The $1$-time partition of $(X^{\#}, T^{\#}_{S})$.}
\end{figure}
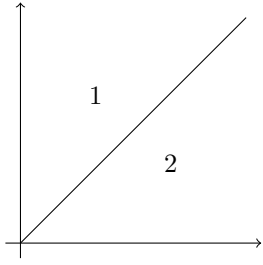

Note that the $1$-time partition of $X^{\#}$ is 
\begin{align*}
    X^{\#}(n-1)&=\{x\in X^{\#} : x_{n-1}\le x_n\},\\
    X^{\#}(n)&=\{x\in X^{\#} : x_{n-1}\ge x_n\}.
\end{align*}
We construct the intertwining map $\phi$ for the Selmer algorithm. 

For $n=2$, the same method in the previous chapter gives
\begin{align*}
    A_{\phi}=
\left(
\begin{array}{ccc}
2 & 1 & 1\\
1 & 1 & 1\\
1 & 1 & 0
\end{array} 
\right).
\end{align*}
A simple analogy for the $n$-dimensional case works fine and we obtain

\begin{Prop}
Selmer's algoriyhm is algebraic self-dual. And 
\begin{align*}
A_{\phi}=
\left(
\begin{array}{ccccc}
2 & \cdots & 2& 1 & 1\\
\vdots & \iddots & ~ & \vdots & \vdots \\
2 & ~ & ~ & ~ & ~ \\
1 & \cdots  & ~ & ~ & 1\\
1 & \cdots & ~  & 1 & 0
\end{array} 
\right).
\end{align*}
\end{Prop}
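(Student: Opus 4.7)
The plan mirrors the Garrity--Schweiger case. Since the full-branched Selmer system $(X,T_S)$ has only two branches, with digit set $\mathcal{D}=\{n-1,n\}$, only three things need verification for the prescribed matrix $A_\phi$: the intertwining identity $A_\phi A_{T^\#}(k)=A_T(k)A_\phi$ for each $k\in\mathcal{D}$; the invertibility of $A_\phi$, so that $\phi$ is a well-defined bijective fractional-linear map; and the branch identification $\phi\bigl(\mathring{X^\#(k)}\bigr)=\mathring{X(k)}$ for $k=n-1,n$.

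For the intertwining step, I would use the remark $A_{f\circ g}=A_f A_g$ to reduce the commutativity $\phi\circ T^\#=T\circ\phi$ to a matrix equation. Both $A_T(i)$ and its transpose $A_{T^\#}(i)$ are very sparse: they essentially cyclically shift a block of coordinates together with a swap between the $0$-th and $i$-th positions and a single sign change. Consequently the two products $A_\phi A_{T^\#}(k)$ and $A_T(k)A_\phi$ reduce to row/column permutations of $A_\phi$, and the choice of an antidiagonal block of $2$'s together with the $2\times 2$ corner $\bigl(\begin{smallmatrix}1&1\\1&0\end{smallmatrix}\bigr)$ is designed precisely so that, on both branches $k=n-1$ and $k=n$, the single $-1$ entry coming from $A_T(k)$ is absorbed.

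For invertibility, I would exhibit $A_\phi^{-1}$ in closed form. By analogy with the Garrity--Schweiger formula, I expect a sparse matrix with entries in $\{-1,0,1\}$, which will be essential for the final step. With $A_\phi^{-1}$ in hand, differentiability and measurability of $\phi$ are automatic from its fractional-linear form.

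The principal obstacle is the branch identification. To verify $\phi(\mathring{X^\#(k)})=\mathring{X(k)}$ for $k=n-1,n$, I would pull back the defining inequalities $x_{i}>1-x_n\ge x_{i+1}$ of $X(k)$ through $\phi^{-1}$ and show that, after clearing the common positive denominator $a_{00}+\sum a_{0j}y_j$, they reduce to the defining inequalities $x_{n-1}\lessgtr x_n$ of $X^\#(k)$. Because the Selmer branches depend on comparisons among several coordinates and because of the projective nature of $\phi$, the denominator must be tracked carefully; organising the verification by the images of the bounding hyperplanes of $X^\#(k)$ and of the point at infinity of $X^\#$ pins down the image polytope. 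Once this boundary-level correspondence is confirmed on the $n=2$ model and extended inductively using the uniform block structure of $A_\phi$, bijectivity on the interiors follows from the invertibility of $A_\phi$ together with the fact that $\{X^\#(n-1),X^\#(n)\}$ and $\{X(n-1),X(n)\}$ are partitions.
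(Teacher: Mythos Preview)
Your plan matches the paper's proof: verify $A_\phi A_{T^\#}(k)=A_T(k)A_\phi$ for $k\in\{n-1,n\}$, write down $A_\phi^{-1}$ explicitly, and then check the branch identification by expressing the defining inequalities of $\Delta(n-1),\Delta(n)$ in the $\phi$-coordinates (and conversely via $A_\phi^{-1}$). The only minor divergence is that the paper carries out the branch check by a single direct computation valid for all $n$ (e.g.\ $b_{n-1}+b_n-1=\frac{B_{n-1}}{2+2B_1+\cdots+2B_{n-2}+B_{n-1}+B_n}$ and $1-2b_n=\frac{B_n-B_{n-1}}{2+2B_1+\cdots+2B_{n-2}+B_{n-1}+B_n}$), rather than an $n=2$ model plus induction; you may find the direct route shorter.
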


\begin{proof}
We can see that for all $k\in\{n-1, n\}$
\begin{align*}
    A_{\phi}A_{T^{\#}}(k)=A_{T}(k)A_{\phi}.
\end{align*}

Let $\phi(B_{1},\dots,B_n)=(b_{1},\dots,b_n)$. Then, for all $(B_{1},\dots,B_n)\in X^{\#}(n-1)$, 
\begin{align*}
b_{n-1}+b_{n}-1=\frac{B_{n-1}}{2+2B_1+\cdots+2B_{n-2}+B_{n-1}+B_{n}}>0
\end{align*}
and
\begin{align*}
1-2b_n=\frac{B_{n}-B_{n-1}}{2+2B_1+\cdots+2B_{n-2}+B_{n-1}+B_{n}}>0.
\end{align*}
Thus, $\phi(X^{\#}(n-1))\subset \Delta(n-1)$. We show $\phi(X^{\#}(n-1))\supset \Delta(n-1)$. Since 
\begin{align*}
A_{\phi}^{-1}=
\left(
\begin{array}{ccccc}
 & & 1& -1 & \\
 & \iddots & -1 & &\\
1 & \iddots & ~ & ~ & ~ \\
-1 & & & 1 & 1\\
 & & & 1 & -1
\end{array} 
\right),
\end{align*}
for all $(b_{1},\dots,b_n)\in\Delta(n-1)$, we can verify that $B_i\ge 0$, $i=1,\dots,n$ and 
\begin{align*}
    B_{n}-B_{n-1}=\frac{1-2b_n}{b_{n-2}-b_{n-1}}>0.
\end{align*}
Thus, $\phi(X^{\#}(n-1))= \Delta(n-1)$. Simiraly, we can see
$\phi(X^{\#}(n))= \Delta(n)$.
\end{proof}

\begin{Cor}
Selmer's algorithm $(X,T)$ is symmetric in measure. i.e., for all $a_1,a_2,\cdots,a_s\in\{n-1,n\}$
\begin{align*}
    \mu(\Delta(a_1,a_2,\cdots,a_s))=\mu(\Delta(a_s,a_{s-1},\cdots,a_1)).
\end{align*}
\end{Cor}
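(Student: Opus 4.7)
The plan is to apply Theorem~\ref{self-dual.symmetric}, or more precisely its partial version noted immediately after the proof of Theorem~\ref{self-dual.symmetric}, which handles algebraic self-duality restricted to a digit subset $\mathcal{D} \subset I$. The preceding proposition has already done the heavy lifting: it constructs the intertwining matrix $A_{\phi}$ and verifies both $A_{\phi} A_{T^{\#}}(k) = A_{T}(k) A_{\phi}$ and $\phi(\mathring{X^{\#}(k)}) = \mathring{\Delta(k)}$ for $k \in \{n-1, n\}$, so Selmer's algorithm is algebraic self-dual on $\mathcal{D} = \{n-1, n\}$.

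Concretely, I would proceed as follows. First, I transfer the problem to the full-branched subsystem $(X, T)$ with $X = \Delta(n-1) \cup \Delta(n)$, whose digit index set is exactly $\{n-1, n\}$ and which was already noted to be full. Second, I rerun the substitution argument from the proof of Theorem~\ref{self-dual.symmetric}: a change of variables via $\phi$ in the double integral defining $\mu^{\#}(\phi^{-1}\Delta(a_1,\ldots,a_s))$ yields
\[
\mu^{\#}(X^{\#}(a_1,\ldots,a_s)) \;=\; \mu(\Delta(a_1,\ldots,a_s)),
\]
using that the commutative diagram together with $\phi(\mathring{X^{\#}(k)}) = \mathring{\Delta(k)}$ forces $\phi^{-1}\Delta(a_1,\ldots,a_s) = X^{\#}(a_1,\ldots,a_s)$ for words in $\{n-1,n\}$. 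Third, I apply Lemma~\ref{full-symmetric} to the full-branched $(X, T)$ to obtain
\[
\mu(\Delta(a_1,\ldots,a_s)) \;=\; \mu^{\#}(X^{\#}(a_s,\ldots,a_1)).
\]
Combining these identities, applied to both the forward word and the reversed word, yields $\mu(\Delta(a_1,\ldots,a_s)) = \mu(\Delta(a_s,\ldots,a_1))$.

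No serious obstacle is anticipated; the argument is essentially a bookkeeping exercise verifying that Theorem~\ref{self-dual.symmetric} applies to the restricted system $(X, T)$ rather than to the original non-full Selmer system $(\Delta, T_S)$. The one subtlety worth flagging is that $(\Delta, T_S)$ itself is not full, which is why the symmetry in measure is claimed only for words whose digits lie in the full-branched digit set $\{n-1, n\}$; this restriction is already reflected in the statement of the corollary.
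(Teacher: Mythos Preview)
Your proposal is correct and matches the paper's intended argument: the corollary is stated without proof in the paper because it follows immediately from Theorem~\ref{self-dual.symmetric} applied to the full-branched subsystem $(X,T)$, using the algebraic self-duality established in the preceding proposition. Note that since the digit set of $(X,T)$ is exactly $\{n-1,n\}$, you do not even need the partial version---Theorem~\ref{self-dual.symmetric} applies directly.
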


\section{Brun Algorithm}
Let $E^{n+1}:=\{x\in\mathbb{R}^{n+1}_{>} : x_0\ge x_1\ge\cdots \ge x_{n}\ge 0\}$. Define
\begin{align*}
    x\in E^{n+1}\longmapsto x^{'}=(x_0-x_1, x_2,\dots, x_{n}).
\end{align*}
Then, there is an index $i=i(x)$, $0\le i\le n$ such that
\begin{align*}
B(x)=(x_{1},\dots, x_{i}, x_0-x_1, \dots, x_{n})\in E^{n+1}.
\end{align*}
We obtain the bottom map $T: \Delta \longrightarrow \Delta$ which makes the diagram
\[
  \begin{CD}
     {E^{n+1}} @>{B}>> {E^{n+1}} \\
  @V{p}VV    @V{p}VV \\
     {\Delta} @>{T=T_B}>> {\Delta}
  \end{CD}
\]
commutative, where $p(x)=\left(\frac{x_1}{x_0},\frac{x_2}{x_0},\dots, \frac{x_n}{x_0}\right)$. The $1$-time partition of $\Delta$ is 
\begin{align*}
    \Delta(i)&=\{x\in\Delta : x_i\ge 1-x_1> x_{i+1}\}
\end{align*}
where $x_0=1$, $x_{n+1}=0$. For digits $i\in\{0,1,2,\dots\}$,  the matrix $A_{T}(i)$ is 
\begin{align*}
A_{T}(i)=
    \bordermatrix{
  & & & & i & & & \cr
  & & 1 & & & & &\cr
  & &   & \ddots & & & &\cr
  & &   & & 1 & & &\cr
i & 1 & -1 &  & & & &\cr
  & & & & & 1 & &\cr
  & & & & & & \ddots & \cr
  & & & & & & & 1 \cr
} \quad \textbf{on}~ \Delta(i).
\end{align*}

The dual space of Brun's algorithm $(\Delta, T)$ is $\Delta^{\#}=\mathbb{R}_{\ge}\times [0,1)^{n-1}$ and the $1$-time partition of $\Delta^{\#}$ is 
\begin{align*}
    \Delta^{\#}(0)&=\{x\in\Delta^{\#} : x_1\ge1\},\\
    \Delta^{\#}(i)&=\{x\in\Delta^{\#} : 0\le x_1<1, x_j<x_i, 1\le j\le n\}.
\end{align*}

We found the intertwining map $\phi$ of Brun algorithm for only $n=2$ case:
\begin{align*}
A_{\phi}=
\left(
\begin{array}{ccc}
1 & 1 & 0\\
1 & 0 & 0\\
0 & 0 & 1\\
\end{array} 
\right).
\end{align*}

And, for $n\ge 3$, we confirm that intertwining map $\phi$
\begin{align*}
\phi(x)=\left(\frac{1}{1+x_1},\frac{x_{2}}{1+x_1},\frac{x_{2}x_{3}}{1+x_1}, \cdots,\frac{x_2\cdots x_{n}}{1+x_1}\right)
\end{align*}
works in $\Delta(0)$.

Now, let $(\Delta, T_M)$ be a multidimensional continued fraction with matrices
\begin{align*}
A_{T_M}(i)=
    \bordermatrix{
  & & & & i & & & \cr
  & & 1 & & & & &\cr
  & &   & \ddots & & & &\cr
  & &   & & 1 & & &\cr
i & 1 & -N &  & & & &\cr
  & & & & & 1 & &\cr
  & & & & & & \ddots & \cr
  & & & & & & & 1 \cr
} \quad \textbf{on}~ \Delta(i, N).
\end{align*}

This algorithm is called Brun multiplicative algorithm. Since $\Delta^{\#}=[0,1]^n$, we can get the invariant density $\mu$ for $T_M$. 
It is known that the multiplicative version is ergodic (see \cite{Schweiger-MCF}). However, by using Mathematica, we observed
\begin{align*}
\mu(\Delta[(1,1),(2,1)])\not=\mu(\Delta[(1,1),(2,1)])
\end{align*}
for dimension $n=2$.

\section{Poincar\'{e} Algorithm}
\begin{CJK*}{UTF8}{goth}
Finally, we give an example that is not self-dual.
Note that this algorithm below is conjugate to the original
Poincar\'{e} algorithm. There are maps $F, G$ that the original map
is $F\circ G$, but ours is $G\circ F$, where $G$ is the sorting map
into non-increasing order. See Chapter 21 of \cite{Schweiger-MCF}.
\end{CJK*}

Let $E^{n+1}:=\{x\in\mathbb{R}^{n+1}_{>} : x_1\ge x_2\ge\cdots \ge x_{n+1}\ge 0\}$. Then define
\begin{align*}
    x\in E^{n+1}\longmapsto x^{'}=(x_1-x_2, x_2-x_3,\dots, x_{n}-x_{n+1},x_{n+1}).
\end{align*}
There is an element $\sigma$ of symmetric group $\mathcal{S}_{n+1}$ such that
\begin{align*}
P(x)=(x'_{\sigma(1)}, x'_{\sigma(2)}, \dots, x'_{\sigma(n+1)})\in E^{n+1}.
\end{align*}

In this section, we consider the normalized map $T_P: \Delta \longrightarrow \Delta$ which makes the diagram
\[
  \begin{CD}
     {E^{n+1}} @>{P}>> {E^{n+1}} \\
  @V{p}VV    @V{p}VV \\
     {\Delta} @>{T=T_P}>> {\Delta}
  \end{CD}
\]
commutative, where $p(x)=\left(\frac{x_2}{x_1},\frac{x_3}{x_1},\dots, \frac{x_{n+1}}{x_1}\right)$.

Then for all digit $\sigma\in \mathcal{S}_{n+1}$, $y=Tx$, $x\in\Delta(\sigma)$ is given by

\begin{align*}
y_{i}=\frac{\sum_{j=1}^{n+1}A_{\sigma^{-1}(i)j}x_{j-1}}{\sum_{j=1}^{n+1}A_{\sigma^{-1}(1)j}x_{j-1}}
\end{align*}
where $x_0=1$ and 
\begin{align*}
A_{T}(e)=((A_{i,j})):=
\left(
\begin{array}{ccccc}
1 & -1&  &  \\
  & \ddots& \ddots & \\
  & ~ & 1 &-1\\
  & ~ & ~ & 1\\
\end{array} 
\right).
\end{align*}

The dual map defined on $\Delta^{\#}=\mathbb{R}^{n}_{>}$. Therefore the invariant density is 
\begin{align*}
\frac{1}{x_1x_2\cdots x_n}.
\end{align*}
The invariant measure $\mu$ is infinite. Note that
\begin{align*}
A_{T}(\sigma)=((A_{\sigma^{-1}(i)j}))=
\bordermatrix{
  & & \sigma^{-1}(i) & \sigma^{-1}(i)+1 & & n \cr
  & & \vdots & \vdots & & \vdots\cr
i & \cdots & 1 & -1 & & \cr
  & & & & & \cr
j & \cdots & & & & 1 \cr
  & & & & & \cr
},
\end{align*}
\begin{align*}
A_{T^{\#}}(\sigma)=
\bordermatrix{
  & & \sigma(1) & & \sigma(i) & & \sigma(i-1) &  \cr
 1& \cdots & 1 & & \vdots & & \vdots & \cr
  & & & & & & & \cr
 i& \cdots & & & 1 &\cdots & -1 & \cr
  & & & & & & &
}.
\end{align*}
Then, we have 
\begin{align*}
\Delta(\sigma)&=\left\{
\begin{array}{ll}
\{x\in \Delta : &x_{\sigma^{-1}(i)-1}-x_{\sigma^{-1}(i)}>x_{\sigma^{-1}(i+1)-1}-x_{\sigma^{-1}(i+1)}, ~\text{for } i=1,2,\dots, n ~(i\not=j-1,j), \\
&x_{\sigma^{-1}(j-1)-1}-x_{\sigma^{-1}(j-1)}>x_{n},~
x_{n}>x_{\sigma^{-1}(j+1)-1}-x_{\sigma^{-1}(j+1)}\}, ~\text{if } j\not=1,n+1,\\
[10pt]
\{x\in \Delta : &x_{n}>x_{\sigma^{-1}(2)-1}-x_{\sigma^{-1}(2)}, \\
&x_{\sigma^{-1}(i)-1}-x_{\sigma^{-1}(i)}>x_{\sigma^{-1}(i+1)-1}-x_{\sigma^{-1}(i+1)}, ~\text{for } i=2,\dots, n\},~\text{if } j=1,\\
[10pt]
\{x\in \Delta : &x_{\sigma^{-1}(i)-1}-x_{\sigma^{-1}(i)}>x_{\sigma^{-1}(i+1)-1}-x_{\sigma^{-1}(i+1)}, ~\text{for } i=1,2,\dots, n-1, \\
&x_{\sigma^{-1}(n)-1}-x_{\sigma^{-1}(n)}>x_{n}\}, ~\text{if } j=n+1
\end{array} \right. 
\end{align*}
and
\begin{align*}
\Delta^{\#}(\sigma)&=\{x\in \Delta^{\#} : x_{\sigma(i+1)-1}-x_{\sigma(i)-1}>0 ~\text{for } i=1,\dots, n\}
\end{align*}
where $x_0=1$ and $j$ is a integer satisfies $\sigma(n+1)=j$.

\begin{figure}[h]
\begin{center}
\begin{tabular}{cc}
\scalebox{1}[1]{
\begin{tikzpicture}[domain=-0.2:5.5]
\draw[very thin,color=gray, dashed] (0,5) -- (5,5);
\draw[thin] (5,0) -- (5,5);
\draw[->] (-0.2,0) -- (5.2,0) ;
\draw[->] (0,-0.2) -- (0,5.2) ;
\draw [domain=0:5]plot(\x,\x);
\draw [domain=5/2:5]plot(\x,{-\x+5)});
\draw [domain=0:5]plot(\x,{\x/2}) ;
\draw [domain=5/2:5]plot(\x,{2*\x-5});
\draw [domain=5/2:15/4]plot(\x,5/2);
\draw [domain=5/3:10/3]plot(10/3,{\x});
\draw [domain=3:5]plot(\x,{3*\x/2-5/2});
\draw [domain=5/2:15/4]plot(\x,\x-5/2);
\draw [domain=0:5/3]plot(10/3,{\x});
\draw [domain=3:5]plot(\x,{-\x/2+5/2});
\fill[lightgray] (10/3,10/3)--(5/2,5/2)--(10/3,5/2)--cycle; 
\fill[lightgray] (10/3,5/3)--(10/3,5/6)--(15/4,5/4)--cycle; 
\fill (2.1,0.8) node[below] {e};
\fill (3.5,0.9) node[below] {(12)};
\fill (2.3,1.9) node[below] {(23)};
\fill (3.3,2.9) node[below] {(123)};
\fill (4.3,3) node[below] {(13)};
\fill (4.3,1.8) node[below] {(132)};
\end{tikzpicture}}
\end{tabular}
\end{center}
\caption{$\Delta[(12), (123)]$ and $\Delta[(123), (12)]$}
\end{figure}
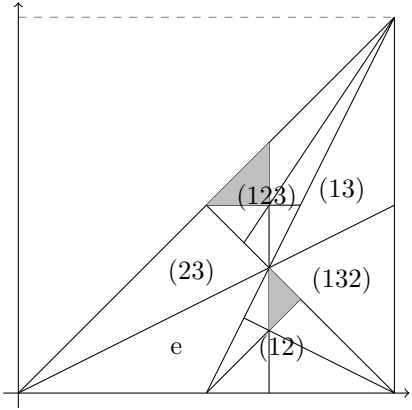

For $n=2$, by direct computation we have
\begin{align*}
    \mu(\Delta[(12),(123)])\not=\mu(\Delta[(123),(12)]).
\end{align*}


Therefore from Theorem \ref{self-dual.symmetric}, this algorithm is not self-dual for $n=2$.

All the same, we shall prove that this algorithm is self-dual on  $\Delta[e]\cup\Delta[(13)]\cup\Delta[(123)]\cup\Delta[(132)]$
with the intertwining map $\phi$.

Let us explain our empirical method to find this intertwining map.
At first, we follow the heuristic way as before. 
Assume that $A_\phi=((a_{ij}))$ has integer entries.
From $A_\phi A_{T^{\#}}(\sigma)= A_{T}(\sigma)A_\phi$, we see it is symmetric. If $\phi(1,1)=(\frac23, \frac13)$, then we see
\begin{align}
    a_{11}+a_{12}+a_{13}:a_{21}+a_{22}+a_{23}:a_{31}+a_{32}+a_{33}=3:2:1
    \label{3:2:1}.
\end{align}
Assume for now that $\Delta^{\#}[e] \cap\Delta^{\#}[(12)]$ is mapped to $\Delta[e]\cap\Delta[(12)]$. Then from
\begin{align*}
    \phi(1,y)=\left(\frac{a_{21}+a_{22}+a_{23}y}{a_{11}+a_{12}+a_{13}y}, \frac{a_{31}+a_{32}+a_{33}y}{a_{11}+a_{12}+a_{13}y}\right),
\end{align*}
if $\lim_{y\rightarrow \infty}\phi(1,y)=(\frac12,0)$, then  there exists an integer $k$ that we have
\begin{align*}
A_{\phi}=
\left(
\begin{array}{ccc}
* & * & 2k\\
* & * & k\\
2k & k & 0\\
\end{array} 
\right).
\end{align*}
However in this case, it is natural to assume 
$\lim_{x\rightarrow \infty}\phi(x,x)=(0,0)$, and then we have
\begin{align*}
a_{22}+a_{23}=a_{32}=0.
\end{align*}
This implies $k=0$ and clearly we have $\phi(\Delta^{\#}(\sigma))\not=\Delta(\sigma)$ which does not fit our purpose. After this wrong trial, we reach the correct assumption 
that $\Delta^{\#}[e]\cap\Delta^{\#}[(12)]$ is mapped to $\Delta[e]\cap\Delta[(23)]$. 
Indeed if
$\lim_{y\rightarrow \infty}\phi(1,y)=(0,0)$, then 
$A_{\phi}$ has the form
\begin{align*}
A_{\phi}=
\left(
\begin{array}{ccc}
* & * & *\\
* & * & 0\\
* & 0 & 0\\
\end{array} 
\right).
\end{align*}
From $\lim_{x\rightarrow \infty}\phi(x,x)=(\frac12,0)$, we obtain
\begin{align*}
\frac{a_{22}}{a_{12}+a_{13}}=\frac12.
\end{align*}
Considering (\ref{3:2:1}), by several trials we found
\begin{align*}
A_{\phi}=
\left(
\begin{array}{ccc}
1 & 1 & 1\\
1 & 1 & 0\\
1 & 0 & 0\\
\end{array} 
\right)
\end{align*}
which satisfies all the conditions on $\Delta[e]\cup\Delta[(13)]\cup\Delta[(123)]\cup\Delta[(132)]$.
We define an involution on $\mathcal{S}_{n}$.

\begin{figure}[h]
\begin{center}
\begin{tabular}{cc}
\scalebox{1}[1]{
\begin{tikzpicture}[domain=-0.2:5.5]
\draw[very thin,color=gray, dashed] (0,5) -- (5,5);
\draw[thin] (5,0) -- (5,5);
\draw[->] (-0.2,0) -- (5.2,0) ;
\draw[->] (0,-0.2) -- (0,5.2) ;
\draw [domain=0:5]plot(\x,\x);
\draw [domain=5/2:5]plot(\x,{-\x+5)});
\draw [domain=0:5]plot(\x,{\x/2}) ;
\draw [domain=5/2:5]plot(\x,{2*\x-5});
\fill (2.1,0.8) node[below] {e};
\fill (3.5,0.9) node[below] {(12)};
\fill (2.3,1.9) node[below] {(23)};
\fill (3.3,2.9) node[below] {(123)};
\fill (4.3,3) node[below] {(13)};
\fill (4.3,1.8) node[below] {(132)};
\draw[->, ultra thick] (10/3,5/3) -- (0,0) ;
\end{tikzpicture}}
\scalebox{1}[1]{
\begin{tikzpicture}[domain=-0.2:5.5]
\draw[->] (-0.2,0) -- (5.2,0) ;
\draw[->] (0,-0.2) -- (0,5.2) ;
\draw [domain=0:5]plot(\x,\x);
\draw [domain=0:5]plot(\x,5/3);
\draw [domain=0:5]plot(5/3,\x) ;
\fill (2.7,3.9) node[below] {e};
\fill (0.8,3.2) node[below] {(12)};
\fill (4,3) node[below] {(23)};
\fill (0.5,1.4) node[below] {(123)};
\fill (1.1,0.8) node[below] {(13)};
\fill (3,1) node[below] {(132)};
\draw[->, ultra thick] (5/3,5/3) -- (5/3,5) ;
\end{tikzpicture}}
\end{tabular}
\end{center}
\caption{The $1$-time partition of $(\Delta, T_{P})$ and $(\Delta^{\#}, T^{\#}_{P})$.}
\end{figure}
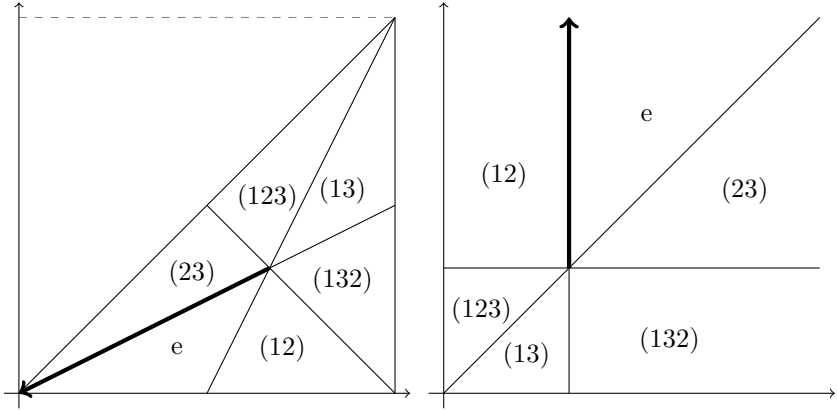

\begin{Def}
We denote the set of involutions of the symmetric group
by
\begin{align*}
    Inv(\mathcal{S}_{n})=\{\sigma\in \mathcal{S}_{n} : \sigma^2=e\}.
\end{align*}
\end{Def}

The cardinality of this set $\#Inv(\mathcal{S}_n)$:$1,2,4,10,26,76,\cdots$ are also known as telephone numbers and various studies have been made on these numbers (see Section 5.1.4 of \cite{{Knuth-TAoCP}}).

\begin{Thm}
The $n$-dimensional Poincar\'{e} algorithm $(\Delta, T)$ is algebraic self-dual on $w_0Inv(\mathcal{S}_{n+1})$ where
\begin{align*}
w_{0}=
\left(
\begin{array}{cccc}
1 & 2 & \cdots & n\\
n & n-1 &\cdots & 1\\
\end{array} 
\right).
\end{align*}
And
\begin{align*}
A_{\phi}=
\left(
\begin{array}{ccc}
1 & \cdots & 1\\
\vdots & \iddots & ~\\
1 & ~ & ~\\
\end{array} 
\right).
\end{align*}
\end{Thm}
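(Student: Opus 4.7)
The plan is to verify, for every $\sigma\in\mathcal{D}:=w_0\mathrm{Inv}(\mathcal{S}_{n+1})$, the two defining conditions of algebraic self-duality: the matrix intertwining $A_\phi A_{T^{\#}}(\sigma)=A_T(\sigma)A_\phi$, and the chamber equality $\phi(\mathring{\Delta^{\#}(\sigma)})=\mathring{\Delta(\sigma)}$. The guiding observation is that both should collapse to the single group-theoretic condition $\sigma^{-1}=w_0\sigma w_0$; writing $\sigma=w_0\tau$, this becomes $\tau=\tau^{-1}$, so the solution set is precisely $\mathcal{D}$.

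For the matrix identity, I would factor $A_T(\sigma)=P_\sigma A_T(e)$, where $P_\sigma$ is the permutation matrix acting on rows by $\sigma$, so that $A_{T^{\#}}(\sigma)=A_T(e)^{T}P_\sigma^{T}$. A short calculation using $(A_\phi)_{ij}=\mathbf{1}[i+j\le n+2]$ and the bidiagonal structure of $A_T(e)$ yields the pivotal identity
\[
A_T(e)\,A_\phi \;=\; A_\phi\,A_T(e)^{T} \;=\; J,
\]
where $J$ is the anti-diagonal matrix of $1$'s on $i+j=n+2$, which is simultaneously the permutation matrix of $w_0$. The required commutation then reduces to $P_\sigma J=JP_\sigma^{T}$, and conjugation by $J$ on permutation matrices is conjugation by $w_0$; this is exactly $w_0\sigma w_0=\sigma^{-1}$.

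For the chamber equality, I would unwind $\phi$ coordinate-wise as
\[
\phi(x)_i \;=\; \frac{1+x_1+\cdots+x_{n-i}}{1+x_1+\cdots+x_n}, \qquad 1\le i\le n,
\]
whose telescoping differences satisfy
\[
\phi(x)_{k-1}-\phi(x)_k \;=\; \frac{x_{n-k+1}}{1+x_1+\cdots+x_n}, \qquad 1\le k\le n+1,
\]
under the uniform convention $\phi(x)_0=1$, $\phi(x)_{n+1}=0$, and $x_0=1$. Substituting these into the chamber description of $\Delta(\sigma)$ recalled before the theorem rewrites its defining chain of inequalities as
\[
x_{n-\sigma^{-1}(1)+1} \;>\; x_{n-\sigma^{-1}(2)+1} \;>\; \cdots \;>\; x_{n-\sigma^{-1}(n+1)+1},
\]
while $x\in\Delta^{\#}(\sigma)$ reads $x_{\sigma(n+1)-1}>\cdots>x_{\sigma(1)-1}$. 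These two chains describe the same open chamber of $\mathbb{R}^{n}_{>}$ exactly when $n+2-\sigma^{-1}(i)=\sigma(n+2-i)$ for every $i$, which is once more $w_0\sigma w_0=\sigma^{-1}$. Bijectivity and smoothness of $\phi$ on these chambers follow from $\det A_\phi=\pm1$.

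The main obstacle is the bookkeeping in the chamber step: the excerpted description of $\Delta(\sigma)$ splits into three subcases depending on whether $\sigma$ sends $n+1$ to $1$, to $n+1$, or to an interior index, the middle subcase inserting the constant $y_n$ into the chain. The plan is to absorb this into the uniform convention $x_0=1$: under $y=\phi(x)$ one has $y_n=x_0/(1+\sum_j x_j)$, so this ``extra'' term is simply the $k=n+1$ difference in the telescoping formula, and the three subcases merge into the single chain displayed above.
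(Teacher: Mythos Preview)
Your argument is correct and follows essentially the same strategy as the paper: both reduce the matrix intertwining and the chamber equality to the single permutation identity $w_0\sigma w_0=\sigma^{-1}$, using that $A_T(e)A_\phi=A_\phi A_T(e)^T$ is the anti-diagonal matrix $J=P_{w_0}$. The only difference is presentational: the paper works with $\phi^{-1}$ and splits the chamber verification into the three subcases $j=1$, $j=n+1$, $1<j<n+1$, whereas your telescoping identity $\phi(x)_{k-1}-\phi(x)_k=x_{n-k+1}/(1+\sum_j x_j)$ under the conventions $x_0=1$, $\phi(x)_0=1$, $\phi(x)_{n+1}=0$ absorbs all three cases into one chain, which is a genuine simplification.
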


\begin{proof}
Let $M=((a_{i,j}))$ be a monomial $(0,1)-$matrix with $a_{i,j}=1$. i.e., there is exactly one ``1'' in each row and each column. We denote
\begin{align*}
M
\longleftrightarrow
\left(
\begin{array}{ccc}
 \cdots & i & \cdots \\
 \cdots & j &\cdots \\
\end{array} 
\right).
\end{align*}
Then, we have
\begin{align*}
w_0\sigma^{-1}&=
\left(
\begin{array}{cccc}
\sigma(1) & \sigma(2) & \cdots & \sigma(n+1)\\
n+1 & n &\cdots & 1\\
\end{array} 
\right)\longleftrightarrow
(A_{\sigma^{-1}(i)j})B, \\
\sigma w_0&=\left(
\begin{array}{cccc}
1 & 2 &\cdots & n+1\\
\sigma(n+1) & \sigma(n) & \cdots & \sigma(1)
\end{array} 
\right)\longleftrightarrow
B(A_{\sigma^{-1}(i)j})^{t}.
\end{align*}

Therefore, we have
\begin{align*}
\{\sigma\in \mathcal{S}_{n+1} : B(A_{\sigma^{-1}(i)j})^{t}=(A_{\sigma^{-1}(i)j})B\}
&=\{\sigma\in \mathcal{S}_{n+1} : (w_0\sigma)^2=e\}\\
&=w_0Inv(\mathcal{S}_{n+1}).
\end{align*}

Let $\phi(B_1,B_2,\dots,B_n)=(b_1,b_2,\dots,b_n)$ and $\sigma\in w_0Inv(\mathcal{S}_{n+1})$. By the definition of $Inv(\mathcal{S}_{n+1})$, 
\begin{align*}
    \sigma^{-1}(i)&=w_{0}\sigma w_{0}(i)\\
    &=w_{0}\sigma(n+1-i+1)\\
    &=n+1-\sigma(n+1-i+1)+1
\end{align*}
and we have 
\begin{align}
    n-\sigma(n-i+2)+1=\sigma^{-1}(i)-1.
\label{Invol-1}
\end{align}

We show $\phi(\Delta^{\#}(\sigma))\supset \Delta(\sigma)$.
Let $(b_1,b_2,\dots,b_n)\in \Delta(\sigma)$.

For $i\not=n-j+1, n-j+2$, by (\ref{Invol-1}), 
\begin{align*}
B_{\sigma(i+1)-1}-B_{\sigma(i)-1}&=\frac{b_{n-\sigma(i+1)+1}-b_{n-\sigma(i+1)+2}}{b_n}-\frac{b_{n-\sigma(i)+1}-b_{n-\sigma(i)+2}}{b_n}\\
&=\frac{b_{\sigma^{-1}(n-i+1)-1}-b_{\sigma^{-1}(n-i+1)}-b_{\sigma^{-1}(n-i+2)-1}+b_{\sigma^{-1}(n-i+2)}}{b_n}\\&>0.
\end{align*}

For $i\not=n-j+1, n-j+2$, since $\sigma(n+1)=j$ and $\sigma\in w_0Inv(\mathcal{S}_{n+1})$, by (\ref{Invol-1}), $\sigma(n-j+2)=1$. Then we have 
\begin{align*}
B_{\sigma(n-j+2)-1}-B_{\sigma(n-j+1)-1}&=1-\frac{b_{n-\sigma(n-j+1)+1}-b_{n-\sigma(n-j+1)+2}}{b_n}\\
&=\frac{b_n-b_{\sigma^{-1}(j+1)-1}+b_{\sigma^{-1}(j+1)}}{b_n}\\
&>0
\end{align*}
and
\begin{align*}
B_{\sigma(n-j+3)-1}-B_{\sigma(n-j+2)-1}&=\frac{b_{n-\sigma(n-j+3)+1}-b_{n-\sigma(n-j+3)+2}}{b_n}-1\\
&=\frac{b_{\sigma^{-1}(j-1)-1}-b_{\sigma^{-1}(j-1)}-b_n}{b_n}\\
&>0.
\end{align*}

Similarly, we have
$\phi(\Delta^{\#}(\sigma))\subset \Delta(\sigma)$.
\end{proof}

\begin{Cor}
The n-dimentional Poincar\'{e} algorithm is symmetric in measure on $w_0Inv(\mathcal{S}_{n+1})$, i.e., for all $\sigma_1, \sigma_2, \dots, \sigma_s\in w_0Inv(\mathcal{S}_{n+1})$,
\begin{align*}
    \mu(\Delta[\sigma_1, \sigma_2, \dots, \sigma_s])=\mu(\Delta[\sigma_s, \sigma_{s-1}, \dots, \sigma_1]).
\end{align*}
\end{Cor}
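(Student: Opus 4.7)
The plan is to reduce the corollary to the preceding Theorem via the general principle (noted in the remark after Theorem \ref{self-dual.symmetric}) that algebraic self-duality on $\mathcal{D}\subset I$ implies symmetry in measure on $\mathcal{D}$. The preceding Theorem supplies the intertwining map $\phi$ with matrix $A_\phi$ satisfying $\phi\circ T^{\#} = T\circ \phi$ and $\phi(\mathring{\Delta^{\#}(\sigma)}) = \mathring{\Delta(\sigma)}$ for every $\sigma\in w_0Inv(\mathcal{S}_{n+1})$; the task is just to reproduce the proof of Theorem \ref{self-dual.symmetric} while restricting all digits to $\mathcal{D} = w_0Inv(\mathcal{S}_{n+1})$.

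Concretely, the first step is to iterate the commutation relation: for $\sigma_1,\dots,\sigma_s\in w_0Inv(\mathcal{S}_{n+1})$, $\phi$ carries $\Delta^{\#}(\sigma_1,\dots,\sigma_s)$ bijectively onto $\Delta(\sigma_1,\dots,\sigma_s)$. The substitution $X=\phi(x)$ together with the matrix identity used in the proof of Theorem \ref{self-dual.symmetric} then yields
\begin{align*}
\mu^{\#}(\Delta^{\#}(\sigma_1,\dots,\sigma_s)) = \mu(\Delta(\sigma_1,\dots,\sigma_s)).
\end{align*}
The second step is to establish $\mu^{\#}(\Delta^{\#}(\sigma_1,\dots,\sigma_s)) = \mu(\Delta(\sigma_s,\dots,\sigma_1))$ by mimicking the computation of Lemma \ref{full-symmetric}, which rests only on the pointwise identity (\ref{2}) and Fubini. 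Combining the two identities produces the claimed equality.

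The main obstacle is that the Poincaré system is not full on $\Delta$ and $\mu$ is infinite, so Lemma \ref{full-symmetric} does not apply off the shelf. What rescues the argument is that, for this system, the invariant density on $\Delta^{\#} = \mathbb{R}^n_{>}$ is indeed obtained from $\int_{\Delta^{\#}} K(x,y)\,dy$ (giving the density $1/(x_1\cdots x_n)$), so the inner integration in Lemma \ref{full-symmetric} recovers the correct density even in the non-full setting. The remaining point to verify is that the cylinders $\Delta(\sigma_1,\dots,\sigma_s)$ and $\Delta^{\#}(\sigma_1,\dots,\sigma_s)$ with all $\sigma_i\in w_0Inv(\mathcal{S}_{n+1})$ have finite $\mu$-measure, so that the change of variables and the Fubini exchange are legitimate; this is the only technical point that needs to be checked explicitly, and it follows from the explicit form of $K$ together with the defining inequalities of these cylinders.
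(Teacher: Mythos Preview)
Your overall strategy is exactly what the paper intends: the corollary is obtained by combining the preceding Theorem with the remark following Theorem~\ref{self-dual.symmetric}, and the paper gives no further argument. However, two of your supporting claims are incorrect, even though the conclusion survives.

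First, the Poincar\'{e} algorithm considered here \emph{is} full. For each $\sigma\in\mathcal{S}_{n+1}$ the branch map on $E^{n+1}$ sends $(x_1,\dots,x_{n+1})$ to the $\sigma$-permutation of $(x_1-x_2,\dots,x_n-x_{n+1},x_{n+1})$; given any $(y_1,\dots,y_{n+1})\in E^{n+1}$ one recovers $x$ by setting $x'_{\sigma(i)}=y_i$ and summing from the bottom, so $T(\Delta(\sigma))=\Delta$. This is also why the paper obtains the density $1/(x_1\cdots x_n)$ by integrating $K(x,y)$ over the whole of $\Delta^{\#}=\mathbb{R}^n_{>}$. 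Hence Lemma~\ref{full-symmetric} applies directly and there is no obstacle to work around.

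Second, your proposed ``only technical point'' --- that the relevant cylinders have finite $\mu$-measure --- is false in general: for $n=2$ the cell $\Delta(e)$ contains a neighbourhood of the origin of the form $\{x_1>2x_2>0\}$, on which $\int \tfrac{dx_1\,dx_2}{x_1x_2}$ diverges, and $e\in w_0\,Inv(\mathcal{S}_3)$. Fortunately this finiteness is unnecessary: the integrand $K(x,y)$ is nonnegative, so Tonelli's theorem justifies the exchange of integrals in Lemma~\ref{full-symmetric}, and the change of variables for $\phi$ holds for nonnegative measurable functions without any integrability hypothesis. The identity $\mu(\Delta[\sigma_1,\dots,\sigma_s])=\mu(\Delta[\sigma_s,\dots,\sigma_1])$ then holds as an equality in $[0,\infty]$.
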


\newpage
\appendix
\section{The slow version of Garrity-Schweiger map}
F. Schweiger defined the Flip-flop map in \cite{Schweiger-BmS}. 
It is known that the jump transformation of this map is Garrity's triangle map (See also \cite{CAS-slowrriangle}).
Similarly, we consider the $n$-dimensional Flip-flop map, and we can see that the jump map of the map is the Garrity-Schweiger map.

Let $\Delta=\{x \in \mathbb{R}_{>}^n : 1\ge x_1\ge\cdots\ge x_n\}$. Let the cylinder set of the Selmer algorithm and Brun algorithm be $\Delta_S(i)$ and $\Delta_B(i)$ respectively.
Then, since
\begin{align*}
\Delta_S(i)=\{x\in\Delta : x_{i}> 1-x_n \ge x_{i+1}\}, \quad \Delta_B(i)=\{x\in\Delta : x_i> 1-x_1\ge x_{i+1}\},
\end{align*}
we have
\begin{align*}
\Delta=\Delta_{S}(0)\cup\Delta_{B}(n).
\end{align*}

Now, we define the map $T:\Delta\rightarrow\Delta$ as
\begin{align*}
A_{T}=
\left\{
\begin{array}{ll}
  \left(
\begin{array}{cccc}
1 & & & -1\\
  & 1 & & \\
  & & \ddots & \\
  & &  & 1\\
\end{array} 
\right) \quad \textbf{on}~ \Delta_S(0),\\ 
   \left(
\begin{array}{cccc}
  & 1 & & \\
  & & \ddots  & \\
  & & & 1 \\
 1 & -1 &  & \\
\end{array} 
\right) \quad \textbf{on}~ \Delta_B(n).
\end{array} \right. 
\end{align*}

We consider the jump transformation over the cylinder $\Delta_S(0)$, then we obtain a map with matrices
\begin{align*}
 \left(
\begin{array}{cccc}
  & 1 & & \\
  & & \ddots  & \\
  & & & 1 \\
 1 & -1 &  & \\
\end{array} 
\right) 
  \left(
\begin{array}{cccc}
1 & & & -1\\
  & 1 & & \\
  & & \ddots & \\
  & &  & 1\\
\end{array} 
\right) ^k
=
 \left(
\begin{array}{cccc}
  & 1 & & \\
  & & \ddots  & \\
  & & & 1 \\
 1 & -1 &  &-k \\
\end{array} 
\right) 
\end{align*}

This map is Garrity-Schweiger map $T_G$.

\begin{Prop}
The $n$-dimensional Flip-Flop algorithm is algebraic self-dual. And
\begin{align*}
A_{\phi}=
\left(
\begin{array}{ccc}
1 & \cdots & 1\\
\vdots & \iddots & ~\\
1 & ~ & ~\\
\end{array} 
\right).
\end{align*}
\end{Prop}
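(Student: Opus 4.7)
My plan is to verify the three ingredients of algebraic self-duality: the matrix intertwining identity on each of the two branches of the Flip-Flop, the correct branchwise correspondence of cylinders under $\phi$, and bijectivity with smooth inverse. The four matrices involved are all very sparse, so each step should reduce to a short inspection once the right formulas are in hand.

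First, I would write out the transposed branch matrices. The transpose of $A_T$ on $\Delta_S(0)$ is the identity with a single $-1$ in the bottom-left corner, and the transpose of $A_T$ on $\Delta_B(n)$ is the backward cyclic shift with $1$ in the last column of row $0$ and $(1, -1, 0, \dots, 0)^{t}$ as its last column. Since $A_\phi$ has the nonzero pattern $\{(i,j) : i+j \le n\}$, both products $A_\phi A_{T^\#}(k)$ and $A_T(k) A_\phi$ are easily computable sparse matrices that can be matched entry by entry. This gives the intertwining identity $A_\phi A_{T^\#}(k) = A_T(k) A_\phi$ for $k \in \{\Delta_S(0), \Delta_B(n)\}$.

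Second, I would read off an explicit rational form for $\phi$ from the anti-triangular shape of $A_\phi$, namely
\begin{align*}
\phi(y_1,\dots,y_n)_i = \frac{1+y_1+\cdots+y_{n-i}}{1+y_1+\cdots+y_n}.
\end{align*}
From this, the monotonicity $1 \ge b_1 \ge b_2 \ge \cdots \ge b_n \ge 0$ is automatic for $y \in \mathbb{R}^n_{\ge}$, so $\phi$ sends $\mathbb{R}^n_{\ge}$ into $\Delta$. A one-line computation gives
\begin{align*}
1 - b_1 - b_n = \frac{y_n - 1}{1+y_1+\cdots+y_n},
\end{align*}
so the sign of $y_n - 1$ determines whether $\phi(y) \in \Delta_S(0)$ or $\Delta_B(n)$. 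Identifying the dual cylinders $\Delta_S^\#(0) = \{y \in \mathbb{R}^n_{\ge} : y_n \ge 1\}$ and $\Delta_B^\#(n) = \{y \in \mathbb{R}^n_{\ge} : y_n < 1\}$ from the transposed branch matrices, this establishes $\phi(\mathring{\Delta^\#(k)}) \subset \mathring{\Delta(k)}$ for both branches.

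Third, I would exhibit $A_\phi^{-1}$ explicitly by factoring $A_\phi = J L$, where $J$ is the order-reversing permutation and $L$ is the lower-triangular all-ones matrix; then $A_\phi^{-1} = L^{-1} J$ is the corresponding anti-difference matrix. This yields an explicit rational inverse $\phi^{-1}$ and hence the reverse inclusions, so $\phi$ restricts to a diffeomorphism between the cylinders. The main obstacle I expect is bookkeeping rather than ideas: the paper never describes the ambient dual space $\Delta^\#$ or the two dual cylinders for the Flip-Flop, so these have to be extracted correctly from the transposed matrices before the sign condition on $y_n - 1$ can be matched against them. Once that identification is fixed, every remaining verification is a short matrix or inequality check.
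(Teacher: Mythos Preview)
The paper states this proposition in the appendix without proof, so there is nothing to compare against directly. Your outline is correct and follows precisely the template the paper uses for the Garrity--Schweiger, Selmer, and Poincar\'e propositions: verify $A_\phi A_{T^\#}(k)=A_T(k)A_\phi$ on each branch, write $\phi$ and $\phi^{-1}$ explicitly from $A_\phi$ and its inverse, and match cylinders through the defining inequalities. Your formula $1-b_1-b_n=(y_n-1)/(1+y_1+\cdots+y_n)$ is the right discriminant, and your identification of the two dual cylinders $\{y_n\ge 1\}$ and $\{y_n<1\}$ from the transposed matrices is exactly what is needed to close the argument; the factorisation $A_\phi=JL$ with $L$ lower-triangular all-ones gives the same anti-diagonal inverse that appears implicitly in the Poincar\'e proof. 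The only point not already spelled out elsewhere in the paper---and which you have to supply yourself, as you note---is the description of $\Delta^\#$ and its partition for the Flip--Flop map; once that is fixed, every remaining step is indeed a short check.
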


\newpage

\end{document}